\newtheorem{theo}{Theorem}
\newtheorem{prop}{Proposition}
\numberwithin{equation}{section}
\title{Properties of Chromy's sampling procedure}
\date{}
\author{Guillaume Chauvet\thanks{\textit{Univ. Rennes, Ensai, CNRS, IRMAR - UMR 6625}}}
\begin{document}

\maketitle

\begin{abstract}
\noindent \cite{chr:79} proposed a unequal probability sampling algorithm, which enables to select a sample in one pass of the sampling frame only. This is the default sequential method used in the \verb"SURVEYSELECT" procedure of the \verb"SAS" software. In this article, we study the properties of Chromy sampling. We prove that the Horvitz-Thompson is asymptotically normally distributed, and give an explicit expression for the second-order inclusion probabilities. This makes it possible to estimate the variance unbiasedly for the randomized version of the method programmed in the \verb"SURVEYSELECT" procedure.
\end{abstract}

\section{Introduction} \label{sec1}

\noindent \cite{chr:79} proposed a fixed-size unequal probability sampling design which is strictly sequential, in the sense that a sample is selected in one pass of the sampling frame only. This algorithm benefits from a stratification effect, in the sense that the selected units are well spread over the population like with systematic sampling. The drawback is that many second-order inclusion probabilities are zero, making unbiased variance estimation not possible. \\

\noindent \cite{chr:79} therefore proposed to partially randomize the order of the units in the population before applying the sampling algorithm. This randomization is sufficient to guarantee that the second-order inclusion probabilities are positive. The randomized Chromy algorithm is the default sequential method currently available in the \verb"SURVEYSELECT" procedure of the \verb"SAS" software. The method has been extensively used for sample surveys, see for example \cite{mil:hen:gol:kur:kon:que:rib:18}, \cite{rad:con:nun:lac:wu:lew:win:sie:18}, \cite{sch:cur:bou:wil:gla:van:wil:rog:18} and \cite{rus:mye:dan:que:las:rib:19} for recent examples. \\

\noindent So far, the properties of this sampling algorithm have not been fully investigated, and this is the purpose of the current paper. We prove that Chromy sampling is equivalent to ordered pivotal sampling \citep{dev:til:98,cha:12}, in the sense that both algorithms lead to the same sampling design. This leads to the Horvitz-Thompson being consistent and asymptotically normally distributed, under weak assumptions. This also leads to an explicit formula for the second-order inclusion probabilities, making unbiased variance estimation possible for randomized Chromy sampling. \\

\noindent The paper is organized as follows. In Section \ref{sec2}, the notation and the assumptions are given. Chromy sampling and ordered pivotal sampling are introduced in Sections \ref{sec3} and \ref{sec4}. The equivalence between both sampling designs is proved in Section \ref{sec5}, and the properties of Chromy sampling are studied. The results of a small simulation study are given in Section \ref{sec6}. We conclude in Section \ref{sec7}. The proofs are gathered in the Appendix.

\section{Notation and assumptions} \label{sec2}

\noindent We consider a finite population $U$ of $N$ sampling units that may be represented by integers $k=1,\ldots,N$. Denote by $\pi=(\pi_1,\ldots,\pi_N)^{\top}$ a vector of probabilities, with $0<\pi_k<1$ for any unit $k \in U$, and with $n=\sum_{k \in U} \pi_k$ the expected sample size. A random sample $S$ is selected in $U$ by means of a sampling design $p(\cdot)$ with parameter $\pi$, in the sense that the expected number of draws for unit $k$ is $\pi_k$. We let $I_k$ denote the number of times that unit $k$ is selected in the sample, and we note $I=(I_1,\ldots,I_N)^{\top}$. \\

\noindent The set of probabilities $\pi$ may be defined proportionally on some positive auxiliary variable known for any unit in the population, which leads to unequal probability sampling with probabilities proportional to size ($\pi$-ps) \citep[][Section 3.6.2]{sar:swe:wre:92}. The sampling algorithm proposed by \cite{chr:79} may handle $\pi_k$'s greater than $1$, in which case a same unit may be selected several times in the sample. In this paper, we focus on the fairly usual situation when all $\pi_k$'s lie between $0$ and $1$, which means that the sampling design is without replacement. In this case, we may interpret $\pi_k$ as the probability for unit $k$ to be included in the sample, and $I_k$ is the sample membership indicator for unit $k$. We are interested in the total $t_y=\sum_{k \in U} y_k$ of some variable of interest taking the value $y_k$ for unit $k \in U$. The Horvitz-Thompson estimator is
    \begin{eqnarray} \label{sec2:eq1}
      \hat{t}_{y\pi} & = & \sum_{k \in U} \frac{y_k}{\pi_k} I_k .
    \end{eqnarray}

\noindent In order to study Chromy sampling, some additional notation is needed. We let $V_k=\sum_{l=1}^k \pi_l$ denote the cumulated inclusion probabilities up to unit $k$, with $V_0=0$. The integer part of $V_k$ is the largest integer smaller than $V_k$, and is denoted as $V_k^I$. The difference between $V_k$ and its integer part $V_k^I$ is the fractional part, and is denoted as $V_k^F=V_k-V_k^I$. For example, if $V_k=3.6$ we have $V_k^I=3$ and $V_k^F=0.6$, and if $V_k=4.0$ we have $V_k^I=4$ and $V_k^F=0$. \\

\noindent A unit $k$ is a {\em cross-border} if $V_{k-1} \le i$ and $V_{k} > i$ for some integer $i=1,\ldots,n-1$. The cross-border units are denoted as $k_i,~i=1,\ldots,n-1,$ and we note $a_i=i-V_{k_i-1}$ and $b_i=V_{k_i}-i$. The cross-border units define a partition of the population into microstrata $U_i,~i=1,\ldots,n,$ which are defined as
    \begin{eqnarray} \label{sec2:eq2}
      U_i & = & \{k \in U;~k_{i-1} \leq k \leq k_i\} \textrm{ with } k_0=0 \textrm{ and } k_n=N+1.
    \end{eqnarray}
The quantities are presented in Figure \ref{nota} for illustration. \\

    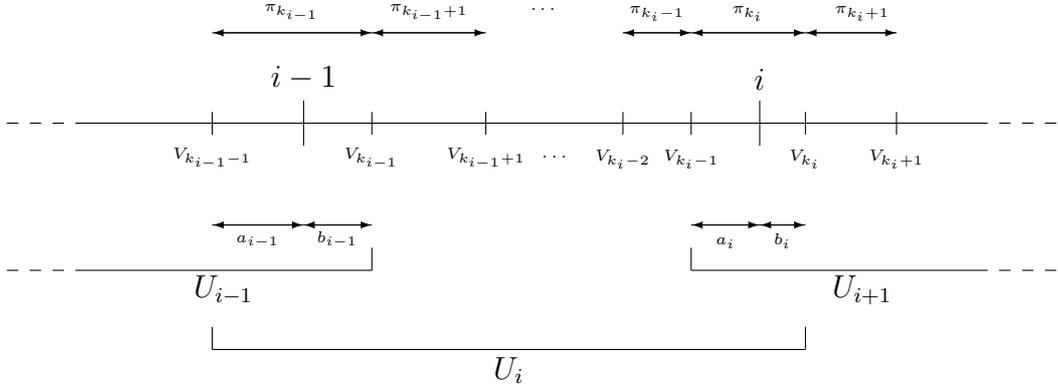
\begin{figure}

    \setlength{\unitlength}{0.03cm}

    \begin{picture}(500,200)

    \put(30,130){\line(1,0){400}}

    \put(20,130){\line(1,0){5}}
    \put(10,130){\line(1,0){5}}
    \put(0,130){\line(1,0){5}}

    \put(435,130){\line(1,0){5}}
    \put(445,130){\line(1,0){5}}
    \put(455,130){\line(1,0){5}}

    \put(130,120){\line(0,1){20}} \put(130,145){\makebox(0,0)[b]{$i-1$}}
    \put(330,120){\line(0,1){20}} \put(330,145){\makebox(0,0)[b]{$i$}}

    \put(90,125){\line(0,1){10}}   \put(90,110){\makebox(0,0)[b]{\tiny{$V_{k_{i-1}-1}$}}}
    \put(160,125){\line(0,1){10}}   \put(160,110){\makebox(0,0)[b]{\tiny{$V_{k_{i-1}}$}}}
    \put(210,125){\line(0,1){10}}   \put(210,110){\makebox(0,0)[b]{\tiny{$V_{k_{i-1}+1}$}}}
                                   \put(240,115){\makebox(0,0)[b]{\tiny{$\ldots$}}}
    \put(270,125){\line(0,1){10}}   \put(270,110){\makebox(0,0)[b]{\tiny{$V_{k_{i}-2}$}}}
    \put(300,125){\line(0,1){10}}   \put(300,110){\makebox(0,0)[b]{\tiny{$V_{k_{i}-1}$}}}
    \put(350,125){\line(0,1){10}}   \put(350,110){\makebox(0,0)[b]{\tiny{$V_{k_{i}}$}}}
    \put(390,125){\line(0,1){10}}   \put(390,110){\makebox(0,0)[b]{\tiny{$V_{k_{i}+1}$}}}

    \put(90,85){\vector(1,0){40}}  \put(130,85){\vector(-1,0){40}} \put(110,75){\makebox(0,0)[b]{\tiny{$a_{i-1}$}}}
    \put(130,85){\vector(1,0){30}} \put(160,85){\vector(-1,0){30}} \put(145,75){\makebox(0,0)[b]{\tiny{$b_{i-1}$}}}
    \put(300,85){\vector(1,0){30}} \put(330,85){\vector(-1,0){30}} \put(315,75){\makebox(0,0)[b]{\tiny{$a_{i}$}}}
    \put(330,85){\vector(1,0){20}} \put(350,85){\vector(-1,0){20}} \put(340,75){\makebox(0,0)[b]{\tiny{$b_{i}$}}}

    \put(90,170){\vector(1,0){70}}  \put(160,170){\vector(-1,0){70}} \put(125,175){\makebox(0,0)[b]{\tiny{$\pi_{k_{i-1}}$}}}
    \put(160,170){\vector(1,0){50}}  \put(210,170){\vector(-1,0){50}} \put(185,175){\makebox(0,0)[b]{\tiny{$\pi_{k_{i-1}+1}$}}}
                                                                    \put(235,180){\makebox(0,0)[b]{\tiny{$\ldots$}}}
    \put(270,170){\vector(1,0){30}}  \put(300,170){\vector(-1,0){30}} \put(285,175){\makebox(0,0)[b]{\tiny{$\pi_{k_{i}-1}$}}}
    \put(300,170){\vector(1,0){50}}  \put(350,170){\vector(-1,0){50}} \put(325,175){\makebox(0,0)[b]{\tiny{$\pi_{k_{i}}$}}}
    \put(350,170){\vector(1,0){40}}  \put(390,170){\vector(-1,0){40}} \put(375,175){\makebox(0,0)[b]{\tiny{$\pi_{k_{i}+1}$}}}


    \put(30,65){\line(1,0){130}} \put(160,65){\line(0,1){10}} \put(95,50){\makebox(0,0)[b]{$U_{i-1}$}}
    \put(20,65){\line(1,0){5}}
    \put(10,65){\line(1,0){5}}
    \put(0,65){\line(1,0){5}}

    \put(90,30){\line(1,0){260}} \put(90,30){\line(0,1){10}} \put(350,30){\line(0,1){10}} \put(220,15){\makebox(0,0)[b]{$U_{i}$}}

    \put(300,65){\line(1,0){130}} \put(300,65){\line(0,1){10}} \put(375,50){\makebox(0,0)[b]{$U_{i+1}$}}
    \put(435,65){\line(1,0){5}}
    \put(445,65){\line(1,0){5}}
    \put(455,65){\line(1,0){5}}
    \end{picture}
    \caption{Inclusion probabilities and cross-border units in microstratum $U_i$, for population $U$} \label{nota}
    \end{figure}

\noindent We consider the following assumptions, which are the same than in \cite{cha:leg:18}:
\begin{itemize}
  \item[H1:] There exists some constants $0<f_0$ and $f_1<1$ such that for any $k \in U$:
    \begin{eqnarray} \label{sec2:eq3}
      f_0 \frac{n}{N} & \leq \pi_k \leq & f_1.
    \end{eqnarray}
  \item[H2:] There exists some constant $C_1$ such that:
    \begin{eqnarray} \label{sec2:eq4}
      \sum_{k \in U} \pi_{k} \left(\frac{y_k}{\pi_k}-\frac{t_y}{n} \right)^4 & \leq & C_1 N^4 n^{-3}.
    \end{eqnarray}
  \item[H3:] There exists some constant $C_2>0$ such that:
    \begin{eqnarray} \label{sec2:eq5}
      \sum_{i=1}^{n} \sum_{k \in U_i} \alpha_{ik} \left(\frac{y_k}{\pi_k}-\sum_{l \in U_i} \alpha_{il} \frac{y_l}{\pi_l}\right)^2 & \geq & C_2 N^2 n^{-1},
    \end{eqnarray}
  where for any unit $k \in U_i$ we take:
    \begin{eqnarray*} 
    \alpha_{ik} & = & \left\{\begin{array}{ll}
                               b_{i-1} & \textrm{if } k=k_{i-1}, \\
                               \pi_k & \textrm{if } k_{i-1}<k<k_i, \\
                               a_i & \textrm{if } k=k_i,
                             \end{array}
                      \right.
    \end{eqnarray*}
  with the convention that $b_0=a_n=0$.
\end{itemize}

\noindent It is assumed in (H1) that the first-order inclusion probabilities are bounded away from $1$. This is not a severe restriction in practice, since a unit with an inclusion probability close to $1$ is usually placed into a take-all stratum, i.e. the probability is rounded to $1$ and the unit is not involved in the selection process. It is also assumed in (H1) that the first-order inclusion probabilities have a lower bound of order $n/N$, which ensures that no design-weight $d_k=1/\pi_k$ is disproportionately larger than the others. \\

\noindent Under the condition (H1), the condition (H2) holds in particular if the variable $y$ has a finite moment of order $4$. This seems a fair assumption in practice, unless the variable of interest is heavily skewed like in wealth surveys, for example. \\

\noindent Assumption (H3) is somewhat technical, and is used to ensure that the Horvitz-Thompson estimator is not close to being degenerate. This assumption is only needed to prove a central-limit theorem for the Horvitz-Thompson estimator. It requires that the dispersion within the microstrata does not vanish. For example, it does not hold if $y_k$ is proportional to $\pi_k$.

    \section{Chromy sampling} \label{sec3}

    \noindent \cite{chr:79} proposed a sampling algorithm which is strictly sequential, in the sense that the units in the population are successively considered for possible selection, and the decision for the unit is made at once. The method is presented in Algorithm \ref{chr:samp}. Let us denote
        \begin{eqnarray} \label{sec3:eq1}
          S_c & \sim & Chr(\pi;U)
        \end{eqnarray}
    for a sample selected by means of Chromy sampling with parameter $\pi$ in the population $U$. The method was originally proposed for $\pi$-ps sampling where the inclusion probabilities are defined proportionally on some auxiliary variable, but it is applicable to any set of inclusion probabilities. \\

    \begin{algorithm}
    \begin{enumerate}
    \item At step $k=1$, take $I_1=1$ with probability $\pi_1$.
    \item At step $k=2,\ldots,N$:
        \begin{enumerate}
          \item If $V_k^F > V_{k-1}^F$, then we take the transition probabilities
            \begin{eqnarray*}
              Pr\left(I_k=1 \left| \sum_{l=1}^{k-1} I_l = V_{k-1}^I \right.\right) & = & \frac{V_k^F-V_{k-1}^F}{1-V_{k-1}^F}, \\
              Pr\left(I_k=1 \left| \sum_{l=1}^{k-1} I_l = V_{k-1}^I+1 \right.\right) & = & 0.
            \end{eqnarray*}
          \item If $V_{k-1}^F \geq V_{k}^F$, then we take the transition probabilities
            \begin{eqnarray*}
              Pr\left(I_k=1 \left| \sum_{l=1}^{k-1} I_l = V_{k-1}^I \right.\right) & = & 1, \\
              Pr\left(I_k=1 \left| \sum_{l=1}^{k-1} I_l = V_{k-1}^I+1 \right.\right) & = & \frac{V_k^F}{V_{k-1}^F}.
            \end{eqnarray*}
        \end{enumerate}
    \end{enumerate}
    \caption{Chromy sampling with parameter $\pi$ in the population $U$} \label{chr:samp}
    \end{algorithm}

   \noindent The method proceeds by considering at each step $k=1,\ldots,N$ the unit $k$ for possible selection, and by computing its probability of selection conditionally on the number of units already selected. This algorithm defines a fixed-size sampling design, and \cite{chr:79} proves that the parameter $\pi$ defining the inclusion probabilities is exactly matched. Note that the case $V_k^F > V_{k-1}^F$ (Step 2.a) corresponds to the treatment of a non cross-border unit, while the case $V_{k-1}^F \geq V_k^F$ (Step 2.b) corresponds to the treatment of a cross-border unit. For illustration, the complete probability tree for Chromy sampling on a small population is given in Appendix  \ref{appA}. \\

   \noindent This algorithm allocates the sample regularly in the population, as stated in Proposition \ref{prop1}. The proof is given in Appendix \ref{appB}. At any step $k$ of the procedure, the number of units selected is equal to the sum of inclusion probabilities up to rounding, a property which is sometimes coined as spatial balancing \citep{gra:lun:sch:12}.

    \begin{prop} \label{prop1}
      For any $k=1,\ldots,N$, we have
        \begin{eqnarray} \label{prop1:eq1}
          V_k^I & \leq \sum_{l=1}^k I_l & \leq V_k^I+1.
        \end{eqnarray}
    \end{prop}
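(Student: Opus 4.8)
The plan is to prove the double inequality by induction on $k$, using that the transition probabilities in Algorithm \ref{chr:samp} are designed precisely so that the running count $\sum_{l=1}^{k}I_l$ is always within $1$ of the cumulated probability $V_k$. For the base case $k=1$, since $0<\pi_1<1$ we have $V_1^I=0$ and $V_1=\pi_1$, and $I_1\in\{0,1\}$, so $0=V_1^I\le I_1\le V_1^I+1$.

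For the inductive step I would assume $V_{k-1}^I\le\sum_{l=1}^{k-1}I_l\le V_{k-1}^I+1$; since the partial sum is an integer, it equals either $V_{k-1}^I$ or $V_{k-1}^I+1$, which are exactly the two conditioning events appearing in Step 2 of the algorithm. The key preliminary observation — and the only place where $0<\pi_k<1$ enters — is to compare $V_k^I$ with $V_{k-1}^I$ in each branch, via the identity $V_k^I-V_{k-1}^I=\pi_k+(V_{k-1}^F-V_k^F)$. In Step 2.a (where $V_k^F>V_{k-1}^F$) the right-hand side lies in $(-1,1)$ and is an integer, hence $V_k^I=V_{k-1}^I$. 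In Step 2.b (where $V_{k-1}^F\ge V_k^F$) the right-hand side is $\ge\pi_k>0$ and $<2$, hence $V_k^I=V_{k-1}^I+1$; note this also shows $V_{k-1}^F>0$, so the ratio $V_k^F/V_{k-1}^F$ in Step 2.b is well defined.

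It then remains to run through the four combinations of branch and conditioning event. In Step 2.a: if $\sum_{l=1}^{k-1}I_l=V_{k-1}^I$ then $I_k\in\{0,1\}$ so the new sum is in $\{V_k^I,V_k^I+1\}$; if $\sum_{l=1}^{k-1}I_l=V_{k-1}^I+1$ then $I_k=0$ so the new sum is $V_k^I+1$. In Step 2.b: if $\sum_{l=1}^{k-1}I_l=V_{k-1}^I$ then $I_k=1$ so the new sum is $V_{k-1}^I+1=V_k^I$; if $\sum_{l=1}^{k-1}I_l=V_{k-1}^I+1$ then $I_k\in\{0,1\}$ so the new sum is in $\{V_k^I,V_k^I+1\}$. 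In every case the bound $V_k^I\le\sum_{l=1}^{k}I_l\le V_k^I+1$ holds, which closes the induction. The only real obstacle is the bookkeeping in the preliminary observation: one must check carefully that $\pi_k<1$ (guaranteed by the without-replacement setting) is exactly what forbids $V_k^I$ from increasing by two across a cross-border unit, and that the inductive hypothesis restricts $\sum_{l=1}^{k-1}I_l$ to precisely the two states whose transition probabilities the algorithm specifies.
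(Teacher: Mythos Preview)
Your proof is correct and follows essentially the same approach as the paper's: an induction on $k$, the observation that $V_k^I=V_{k-1}^I$ for non cross-border units (Step 2.a) and $V_k^I=V_{k-1}^I+1$ for cross-border units (Step 2.b), and a check of the four combinations of branch and conditioning event. Your treatment is in fact slightly more detailed, since you justify the relation between $V_k^I$ and $V_{k-1}^I$ via the identity $V_k^I-V_{k-1}^I=\pi_k+(V_{k-1}^F-V_k^F)$, whereas the paper merely asserts it.
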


    A drawback of the method is that, by construction, two non cross-border units inside the same microstratum $U_i$ may not be selected jointly in the sample. Therefore, many second-order inclusion probabilities are equal to $0$ and the variance of the Horvitz-Thompson estimator may not be unbiasedly estimated. \\

    \noindent For this reason, \cite{chr:79} proposed to use a randomized procedure, which is as follows. The population $U$ is viewed as a closed loop, and we consider the set $\Sigma_c$ of the $N$ possible circular permutations, each of which using a different unit as the first one. The first permutation is $\sigma_1$, with the natural order $1,\ldots,N$. For $k=2,\ldots,N$ the $k$-th permutation is $\sigma_k$ where the units are in the order $k,\ldots,N,1,\ldots,k-1$. The randomized Chromy sample $S_{rc}$ is selected as
        \begin{eqnarray} \label{sec3:eq2}
          S_{rc} & \sim & Chr(\pi^{\sigma_k};U^{\sigma_k}),
        \end{eqnarray}
    with $\sigma_k$ a random permutation selected in $\Sigma_c$ with probability $\pi_k/n$, with $U^{\sigma_k}$ the population ordered with respect to $\sigma_k$, and $\pi^{\sigma_k}$ the vector of probabilities ordered accordingly. This is the algorithm currently implemented in the \verb"SURVEYSELECT" procedure of the \verb"SAS" software.

    \section{Ordered Pivotal sampling} \label{sec4}

    \noindent Ordered pivotal sampling \citep{ful:70,dev:til:98,cha:12} is presented in Algorithm \ref{piv:samp}. This is a succession of duels between units, and at each step the two first units remaining in the population are considered. If the sum of their probabilities is lower than $1$ (rejection step), one of the unit is randomly discarded while the other gets the sum of their probabilities. If the sum of their probabilities is greater than $1$ (selection step), one of the unit is randomly selected while the other goes on with the residual probability. For illustration, the complete probability tree for ordered pivotal sampling on a small population is given in Appendix \ref{appA}. \\

    \begin{algorithm}
    \begin{enumerate}
    \item Initialize with $\pi(0)=\pi_N$.
    \item At step $t=1,\ldots,T$:
        \begin{enumerate}
        \item Initialize with $\pi(t)=\pi(t-1)$.
        \item Take $k<l$ the two first units in the population such that
            \begin{eqnarray*}
              \pi_{k}(t-1) \notin \{0,1\} & \textrm{and} & \pi_{l}(t-1) \notin \{0,1\}
            \end{eqnarray*}
        \item If $\pi_k(t-1)+\pi_l(t-1) \leq 1$ (rejection step), then do:
            \begin{eqnarray*}
              \{\pi_k(t),\pi_l(t)\} & = & \left\{\begin{array}{ll}
                                                 \{\pi_k(t-1)+\pi_l(t-1),0\} & \textrm{with prob. } p(t) \\
                                                 \{0,\pi_k(t-1)+\pi_l(t-1)\} & \textrm{with prob. } 1-p(t),
                                               \end{array}
               \right. \\
               \textrm{where } p(t) & = & \frac{\pi_k(t-1)}{\pi_k(t-1)+\pi_l(t-1)}.
            \end{eqnarray*}
        \item If $\pi_k(t-1)+\pi_l(t-1) > 1$ (selection step), then do:
            \begin{eqnarray*}
              \{\pi_k(t),\pi_l(t)\} & = & \left\{\begin{array}{ll}
                                                 \{1,\pi_k(t-1)+\pi_l(t-1)-1\} & \textrm{with prob. } p(t) \\
                                                 \{\pi_k(t-1)+\pi_l(t-1)-1,1\} & \textrm{with prob. } 1-p(t)
                                               \end{array}
               \right. \\
               \textrm{where } p(t) & = & \frac{1-\pi_l(t-1)}{2-\pi_k(t-1)-\pi_l(t-1)}.
            \end{eqnarray*}
        \end{enumerate}
    \item The algorithm stops at step $T$ when all the components of $\pi(T)$ are $0$ or $1$. Take $I=\pi(T)$.
    \end{enumerate}
    \caption{Ordered pivotal sampling with parameter $\pi$ in the population $U$} \label{piv:samp}
    \end{algorithm}

    \noindent The pivotal sample is selected in at most $N-1$ steps. Pivotal sampling is a particular case of the cube method \citep{dev:til:04}, which enables to perform balanced sampling, i.e. to select samples such that the Horvitz-Thompson estimator exactly matches the known totals for some auxiliary variables. Pivotal sampling has found uses in spatial sampling, since it enables to spread well the sample over space: see for example \cite{gra:lun:sch:12} for the so-called local pivotal method, \cite{cha:leg:18} for the so-called pivotal tesselation method, or \cite{ben:pie:pos:17} for a recent review on spatial sampling methods. Pivotal sampling is also of use in Monte Carlo methods \citep{ger:cho:whi:19}.

\section{Properties of Chromy sampling} \label{sec5}

\noindent We first prove in Theorem \ref{theo1} that Chromy sampling and ordered pivotal sampling are equivalent, which is the main result of the paper. The proof is lengthy, and given in Appendix \ref{appC}.

\begin{theo} \label{theo1}
Ordered pivotal sampling and Chromy sampling with the same parameter $\pi$ induce the same sampling design.
\end{theo}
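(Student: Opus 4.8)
The plan is to argue by induction on the population size $N$, peeling off the first microstratum at each step. When $N=2$ one necessarily has $n=1$ and $\pi_2=1-\pi_1$, and both algorithms select unit $1$ with probability $\pi_1$ and unit $2$ otherwise, so they agree. More generally I would first dispose of the case $n=1$ for arbitrary $N$: there is then a single microstratum, every step of Algorithm~\ref{chr:samp} falls under Step 2.a except the last, which falls under Step 2.b, and every duel of Algorithm~\ref{piv:samp} is a rejection step except the last, which is a selection step; in both cases a short telescoping argument (using $1-\pi_\ell/(1-V_{\ell-1})=(1-V_\ell)/(1-V_{\ell-1})$) shows that exactly one unit is selected, unit $k$ with probability $\pi_k$, so the two designs coincide.

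Now assume $n\ge 2$ and that the result holds for all populations with fewer than $N$ units. Let $k_1\ge 2$ be the first cross-border unit and $U_1=\{1,\dots,k_1\}$. On the Chromy side, a direct computation with the transition probabilities of Algorithm~\ref{chr:samp} (again telescoping over the non-cross-border units $1,\dots,k_1-1$, then applying Step 2.b at $k_1$) shows that, for each $j<k_1$, unit $j$ is the first selected unit with probability $\pi_j$, no unit $<k_1$ is selected with probability $a_1$, and then $I_{k_1}=1$ with conditional probability $b_1/V_{k_1-1}$ in the former case and with probability $1$ in the latter; in particular $\Pr(\sum_{l\le k_1}I_l=2)=b_1$. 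Since Algorithm~\ref{chr:samp} is Markov in the running count $\sum_{l\le k}I_l$, and since shifting the cumulated probabilities by $V_k\mapsto V_k-1$ for $k>k_1$ turns its remaining steps into Chromy sampling on the reduced population with probabilities $(b_1,\pi_{k_1+1},\dots,\pi_N)$ and a fictitious leading unit carrying the value $\epsilon:=\sum_{l\le k_1}I_l-1$, the full Chromy design factorizes as follows: draw the first selected unit and the indicator $I_{k_1}$ as above, then run Chromy sampling on the reduced population conditionally on its leading unit equalling $\epsilon$.

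On the pivotal side, I would show that the first $k_1-1$ duels consist of $k_1-2$ rejection steps, leaving a single ``carrier'' unit — which is unit $j$ with probability $\pi_j/V_{k_1-1}$ — holding residual probability $V_{k_1-1}$ while all other units of $\{1,\dots,k_1-1\}$ are rejected, followed by one selection step against unit $k_1$. This last step yields one of exactly two configurations: with probability $(1-\pi_{k_1})/(2-V_{k_1})$ the former carrier is selected and unit $k_1$ becomes the carrier with residual $b_1$; with probability $a_1/(2-V_{k_1})$ unit $k_1$ is selected and the former carrier stays on with residual $b_1$. In either configuration exactly one unit of $U_1$ is definitively selected and one unit of $U_1$ survives as a carrier with residual $b_1$, so the rest of the algorithm is ordered pivotal sampling on the reduced population with probabilities $(b_1,\pi_{k_1+1},\dots,\pi_N)$ and this carrier as leading unit — which, by the induction hypothesis, has the same design as Chromy sampling on the same reduced population.

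It then remains to reconcile the two factorizations. Writing $Q$ for the common law of the reduced design, indexed by the value $\delta\in\{0,1\}$ of its leading unit, one has $Q(\delta=1)=b_1=\Pr(\epsilon=1)$, so the two ``conditionally on the leading unit'' prescriptions are compatible, and computing $\Pr(S=s)$ for a fixed sample $s$ reduces to a finite case check according to $s\cap U_1$. For instance, when $s\cap U_1=\{j,k_1\}$ the Chromy value is $(\pi_j/V_{k_1-1})\,Q(\delta=1,s_{>k_1})$, while the pivotal value collects contributions from both configurations above and equals $\big((1-\pi_{k_1})+a_1\big)(2-V_{k_1})^{-1}(\pi_j/V_{k_1-1})\,Q(\delta=1,s_{>k_1})$, and these agree because $(1-\pi_{k_1})+a_1=1-b_1=2-V_{k_1}$; the cases $s\cap U_1=\{j\}$ with $j<k_1$ and $s\cap U_1=\{k_1\}$ are handled identically, using $1-\pi_{k_1}=V_{k_1-1}-b_1$ and $\sum_{j<k_1}\pi_j=V_{k_1-1}$. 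The step I expect to be the main obstacle is the pivotal-side structural claim — that once the units of $U_1$ have been processed the algorithm sits in exactly one of the two configurations described above. Proving it cleanly requires first establishing the pivotal analogue of Proposition~\ref{prop1}, namely that once units $1,\dots,k$ have entered the duels the residual probability carried by Algorithm~\ref{piv:samp} is deterministically $V_k^F$ and the number of already finally selected units among $\{1,\dots,k\}$ is $V_k^I$ (this follows from the conservation of total probability mass by both rejection and selection steps, together with integrality of partial counts), and then tracking carefully which unit is the current carrier and whether its selection indicator has already been revealed, since in ordered pivotal sampling a unit's indicator may be fixed several duels after that unit first enters; a secondary, purely technical nuisance is the treatment of the degenerate situations where some $V_k$ is an exact integer (so that an $a_i$ or a $b_i$ vanishes), which must be separated out but are strictly simpler.
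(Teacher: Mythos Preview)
Your inductive approach is correct and genuinely different from the paper's. The paper does \emph{not} argue by induction on $N$; instead it proves the equivalence in one shot via a clustered population $U_c$ (singleton clusters for cross-border units, one cluster per run of non-cross-border units). It first shows (Proposition~\ref{prop2}) that Chromy sampling on $U$ is a two-stage procedure: Chromy sampling on $U_c$ at the first stage, then independent size-one sampling inside each selected cluster. It then computes the Markov transition probabilities between ordered sampled clusters (Proposition~\ref{prop3}) and matches them with the corresponding formulas for ordered pivotal sampling established in \cite{cha:12}. So the paper outsources the entire pivotal side to \cite{cha:12}; your argument, by contrast, is self-contained and rederives the needed pivotal structure (your ``pivotal analogue of Proposition~\ref{prop1}'') from the conservation of mass in Algorithm~\ref{piv:samp}.

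What each buys: the paper's route is shorter \emph{given} the machinery of \cite{cha:12}, and its two-stage viewpoint is exactly what is reused later for the variance formulas. Your route avoids any external input and makes the recursive structure of both algorithms transparent, at the price of the case analysis on $s\cap U_1$ and of the degenerate boundary cases you flag. Two small remarks on your write-up: in the $n=1$ base case the last pivotal duel is actually a rejection step (the cumulative sum is exactly $1$, hence $\le 1$ in Step~2.c of Algorithm~\ref{piv:samp}), not a selection step, though the conclusion is unchanged; and your observation that $Q(\delta=1)=b_1$ is precisely what glues the Chromy-side conditioning on $\epsilon$ to the pivotal-side unconditional reduced design, so it is worth stating explicitly that $0<b_1<1$ (which follows from $V_{k_1-1}\le 1$, $V_{k_1}>1$, and $\pi_{k_1}<1$) so that the reduced population is admissible for the induction hypothesis.
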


\noindent By using the characterization of Chromy sampling given in Theorem \ref{theo1}, the mean-square consistency of the Horvitz-Thompson estimator stated in equation (\ref{theo2:eq1}) of Theorem \ref{theo2} is a direct consequence of Theorem 2 in \cite{cha:17}. The central-limit theorem stated in equation (\ref{theo2:eq2}) is a direct consequence of Theorem 1 in \cite{cha:leg:18}. \\

\begin{theo} \label{theo2}
  Suppose that the sample $S_c$ is selected by means of $Chr(\pi;U)$. If assumption (H2) holds, then
    \begin{eqnarray}
      E\left\{N^{-1}(\hat{t}_{y\pi}-t_y)\right\} & = & O(n^{-1}). \label{theo2:eq1}
    \end{eqnarray}
  If in addition assumptions (H1) and (H3) hold, then
    \begin{eqnarray}
      \frac{\hat{t}_{y\pi}-t_y}{\sqrt{V(\hat{t}_{y\pi})}} & \underset{\mathcal{L}}{\longrightarrow} & \mathcal{N}(0,1), \label{theo2:eq2}
    \end{eqnarray}
  where $\underset{\mathcal{L}}{\longrightarrow}$ stands for the convergence in distribution.
\end{theo}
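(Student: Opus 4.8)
The plan is to deduce both statements from the equivalence established in Theorem~\ref{theo1}, which reduces every distributional question about Chromy sampling to the corresponding question about ordered pivotal sampling, where the relevant asymptotic results are already available in the literature.

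First I would record the immediate consequence of Theorem~\ref{theo1}: since $Chr(\pi;U)$ and ordered pivotal sampling with the same parameter $\pi$ induce the same sampling design, the joint law of the indicator vector $I=(I_1,\ldots,I_N)^{\top}$ is identical under the two procedures, and hence so is the law of $\hat t_{y\pi}=\sum_{k\in U}(y_k/\pi_k)\,I_k$ together with all of its moments. Consequently every property of $\hat t_{y\pi}$ proved under ordered pivotal sampling transfers verbatim to Chromy sampling, and it suffices to invoke the appropriate results.

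For the mean-square consistency stated in~(\ref{theo2:eq1}) I would appeal to Theorem~2 of \cite{cha:17}, which, for ordered pivotal sampling, bounds the mean-square error $E\{(\hat t_{y\pi}-t_y)^2\}$ by $O(N^2 n^{-1})$ under a fourth-moment condition on the $y_k/\pi_k$; one checks that assumption~(H2) is exactly the condition required there, whence~(\ref{theo2:eq1}) follows after the $N^{-1}$ normalisation. For the central limit theorem in~(\ref{theo2:eq2}) I would invoke Theorem~1 of \cite{cha:leg:18}: the assumptions~(H1)--(H3) above were deliberately chosen to coincide with those of that paper --- in particular the microstrata $U_i$ and the weights $\alpha_{ik}$ entering~(H3) are precisely the objects governing the asymptotic variance of ordered pivotal sampling, and~(H3) guarantees that $V(\hat t_{y\pi})$ does not vanish too fast --- so the convergence $(\hat t_{y\pi}-t_y)/\sqrt{V(\hat t_{y\pi})}\underset{\mathcal L}{\longrightarrow}\mathcal N(0,1)$ is obtained at once.

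The only genuine work beyond quoting these two theorems is bookkeeping: one must check that \cite{cha:17} and \cite{cha:leg:18} are stated for exactly the present set-up --- same Horvitz-Thompson estimator, same normalisation $n=\sum_{k\in U}\pi_k$, same microstratification induced by the cross-border units --- so that the phrase ``direct consequence'' is literally accurate and no reparametrisation is needed. The substantive difficulty, namely that Chromy sampling is in fact ordered pivotal sampling in disguise, has already been isolated in Theorem~\ref{theo1}; once that is granted, there is no remaining obstacle here.
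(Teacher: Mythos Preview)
Your proposal is correct and matches the paper's own argument essentially verbatim: the paper likewise reduces Theorem~\ref{theo2} to Theorem~\ref{theo1} and then cites Theorem~2 of \cite{cha:17} for~(\ref{theo2:eq1}) and Theorem~1 of \cite{cha:leg:18} for~(\ref{theo2:eq2}), with no additional computation. Your extra remarks about checking that the set-ups coincide are appropriate caution but do not go beyond what the paper does.
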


\noindent It also follows from Theorem \ref{theo1} that Chromy sampling is a negatively associated sampling design \citep{joa:pro:83}. This implies that the Sen-Yates-Grundy conditions are satisfied. From Theorem 2 in \cite{ber:cle:19}, the Horvitz-Thompson also satisfies a Bennett/Bernstein-type exponential inequality. \\

\noindent From Theorem 5.1 in \cite{cha:12}, and from the computation given in \cite{dev:98}, it is possible to give an explicit expression for the second-order inclusion probabilities under Chromy sampling. This is the purpose of Theorem \ref{theo3}. \\

\begin{theo} \label{theo3}
    Let $k$ and $l$ be two distinct units in $U$. If $k$ and $l$ are two non cross-border units that belong to the same microstratum
      $U_i$, then
        $$ \pi_{kl}=0,$$
      if $k$ and $l$ are two non cross-border units that belong to distinct microstrata $U_i$ and $U_j$, respectively, where $i<j$, then
        $$ \pi_{kl}=\pi_k \pi_l \left\{ 1-c(i,j)\right\},$$
      if $k=k_{i-1}$ and $l$ is a non cross-border unit that belongs to the microstratum $U_j$ where $i \leq j$, then
        $$ \pi_{kl}=\pi_k \pi_l \left[ 1-b_{i-1} (1-\pi_k) \left\{\pi_k (1-b_{i-1}) \right\}^{-1} c(i,j) \right],$$
      if $l=k_{j-1}$ and $k$ is a non cross-border unit that belongs to the microstratum $U_i$ where $i < j$, then
        $$ \pi_{kl}=\pi_k \pi_l \left\{ 1-(1-\pi_l)(1-b_{j-1}) (\pi_l b_{j-1})^{-1} c(i,j) \right\},$$
      if $k=p_{i-1}$ and $l=p_{j-1}$, where $i < j$, then
        $$ \pi_{kl}=\pi_k \pi_l \left[ 1- b_{i-1} (1-b_{j-1}) (1-\pi_k) (1-\pi_l) \left\{\pi_k \pi_l b_{j-1} (1-b_{i-1}) \right\}^{-1} c(i,j) \right],$$
    where $c(i,j)=\prod_{l=i}^{j-1} c_l$, $c_l=a_l b_l \left\{(1-a_l)(1-b_l)\right\}^{-1}$ and with $c(i,i)=1$.
\end{theo}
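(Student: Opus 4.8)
The plan is to exploit Theorem~\ref{theo1}, which reduces the problem to computing the second-order inclusion probabilities of ordered pivotal sampling, for which the required machinery is already available in \cite{cha:12} and \cite{dev:98}. First I would recall how ordered pivotal sampling decomposes along the microstrata $U_1,\dots,U_n$. Once the cross-border units $k_{i-1}$ and $k_i$ are assigned the split masses $b_{i-1}$ and $a_i$ (so that $\sum_{k\in U_i}\alpha_{ik}=1$), the duels confined to $U_i$ produce exactly one selection inside $U_i$; and since $V_{k_i}^I=i$, Proposition~\ref{prop1} shows that the partial-sum excess $D_i=\sum_{l=1}^{k_i}I_l-i$ takes only the values $0$ and $1$ at each cross-border index. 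This already settles the first case: two non cross-border units of the same microstratum cannot both be selected, so $\pi_{kl}=0$, in agreement with the general formula and the convention $c(i,i)=1$.

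Next I would describe the coupling between microstrata. Conditionally on $(D_1,\dots,D_{n-1})$ the within-microstratum selections are mutually independent, and $(D_1,\dots,D_{n-1})$ is a two-state Markov chain whose transition probabilities are functions of $a_i$ and $b_i$. Carrying out this computation — the part borrowed from \cite{dev:98} — produces the ratios $c_i=a_ib_i\{(1-a_i)(1-b_i)\}^{-1}$, and the product $c(i,j)=\prod_{l=i}^{j-1}c_l$ records the accumulated dependence between the chain states at $k_i$ and $k_j$. Theorem~5.1 of \cite{cha:12} then expresses $\pi_{kl}$ for ordered pivotal sampling in terms of this structure, which is what I would evaluate case by case.

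Finally I would evaluate that formula in each of the remaining four cases. For two non cross-border units $k\in U_i$ and $l\in U_j$ with $i<j$, the event $\{I_k=I_l=1\}$ fixes the within-microstratum selection in $U_i$ and $U_j$ and forces a prescribed pair of boundary states; multiplying the corresponding within-microstratum probabilities by the Markov-chain probability of realizing those states and simplifying yields $\pi_k\pi_l\{1-c(i,j)\}$. When $k$ or $l$ is itself a cross-border unit $k_{i-1}$ (respectively $k_{j-1}$), the same scheme applies, but the event $\{I_{k_{i-1}}=1\}$ is compatible with a selection arising ``from either side'' of the boundary, so the conditioning is on a different configuration; tracking this carefully produces the extra prefactors $b_{i-1}(1-\pi_k)\{\pi_k(1-b_{i-1})\}^{-1}$, $(1-\pi_l)(1-b_{j-1})(\pi_l b_{j-1})^{-1}$, and, when both units are cross-border, their product. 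The boundary conventions $b_0=a_n=0$ must be respected throughout so that the expressions degenerate correctly at the two ends of the population.

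The main obstacle will be precisely this case analysis: for each type of pair one must identify which joint configuration of the Markov chain at the relevant cross-border points is equivalent to ``both units selected'', evaluate the probability of that configuration, and carry out the ensuing algebraic simplification — in particular getting the cross-border prefactors and the edge conventions right. The conceptual inputs (the structure of ordered pivotal sampling and the formula of \cite{cha:12}, together with the computation of \cite{dev:98}) are given, so the remaining work is the bookkeeping and the translation into the $(a_i,b_i,c_i)$ notation.
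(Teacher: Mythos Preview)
Your proposal is correct and follows essentially the same approach as the paper: both invoke Theorem~\ref{theo1} to identify Chromy sampling with ordered pivotal sampling, and then import the second-order inclusion probabilities from Theorem~5.1 of \cite{cha:12} together with the computation in \cite{dev:98}. Your write-up is in fact more detailed than the paper's, which simply states that the formulas follow from these two references once Theorem~\ref{theo1} is in hand, without spelling out the Markov-chain picture or the case analysis.
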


\noindent It is clear from Theorem \ref{theo3} that many second-order inclusion probabilities are equal to zero for Chromy sampling. It also makes possible to compute the second-order inclusion probabilities for randomized Chromy sampling. For any units $k \neq l \in U$, let us denote $\pi_{kl}^{rc}$ their second-order inclusion probability under randomized Chromy sampling. Then:
    \begin{eqnarray} \label{sec5:eq1}
    \pi_{kl}^{rc} & = & \sum_{i \in U} \frac{\pi_i}{n} \pi_{kl}^{\sigma_i},
    \end{eqnarray}
with $\pi_{kl}^{\sigma_i}$ the joint selection probabilities of units $k$ and $l$ with the permutation $\sigma_i$, i.e. when Chromy sampling is applied to the population $U^{\sigma_i}$ with parameter $\pi^{\sigma_i}$. A \verb"SAS IML" subroutine to compute the second-order inclusion probabilities in $(\ref{sec5:eq1})$ is available as  Supplementary Material. \\

\noindent We evaluate in the simulation study performed in Section \ref{sec6} a variance estimator making use of second-order inclusion probabilities computed from equation (\ref{sec5:eq1}). For illustration, we give in this Section a small example. We consider a population $U$ of size $N=8$, with the parameter $$\pi=(0.2,0.4,0.7,0.4,0.6,0.6,0.3,0.8)^{\top},$$ which leads to a sample of size $n=4$. From equation (\ref{sec5:eq1}), we obtain the following matrix of second-order inclusion probabilities (rounded to three decimal places):
    \begin{eqnarray*}
      (\pi_{kl}^{rc}) & = &
      \left(
      \begin{array}{cccccccc}
    0.200 & 0.041 & 0.133 & 0.075 & 0.116 & 0.108 & 0.046 & 0.081 \\
          & 0.400 & 0.171 & 0.142 & 0.224 & 0.227 & 0.099 & 0.297 \\
          &       & 0.700 & 0.209 & 0.410 & 0.415 & 0.207 & 0.555 \\
          &       &       & 0.400 & 0.118 & 0.224 & 0.113 & 0.319 \\
          &       &       &       & 0.600 & 0.293 & 0.165 & 0.474 \\
          &       &       &       &       & 0.600 & 0.065 & 0.469 \\
          &       &       &       &       &       & 0.300 & 0.205 \\
          &       &       &       &       &       &       & 0.800 \\
      \end{array}
      \right).
    \end{eqnarray*}
We also selected $10^6$ samples by means of the \verb"SURVEYSELECT" procedure with the option \verb"METHOD=PPS_SEQ", which leads to randomized Chromy sampling. These $10^6$ samples are used to obtain a simulation-based approximation of the matrix of second-order inclusion probabilities, which is given below (rounded to three decimal places):
    \begin{eqnarray*}
      (\pi_{kl,sim}^{rc}) & = &
      \left(
      \begin{array}{cccccccc}
        0.200 & 0.041 & 0.133 & 0.075 & 0.116 & 0.108 & 0.046 & 0.081 \\
              & 0.400 & 0.171 & 0.142 & 0.223 & 0.227 & 0.099 & 0.296 \\
              &       & 0.701 & 0.210 & 0.410 & 0.416 & 0.208 & 0.556 \\
              &       &       & 0.400 & 0.118 & 0.225 & 0.113 & 0.318 \\
              &       &       &       & 0.600 & 0.293 & 0.165 & 0.474 \\
              &       &       &       &       & 0.600 & 0.065 & 0.469 \\
              &       &       &       &       &       & 0.300 & 0.205 \\
              &       &       &       &       &       &       & 0.800 \\
          \end{array}
          \right).
    \end{eqnarray*}
It is clear that both matrices are almost identical.

\section{Simulation study} \label{sec6}

\noindent We conducted a simulation study in order to evaluate variance estimation and interval estimation for randomized Chromy sampling. The set-up is inspired from \cite{cha:haz:les:17}. We generate $2$ populations of size $N=500$, each consisting of an auxiliary variable $x$ and $4$ variables of interest $y_1,\ldots,y_4$. In the first population, the $x$-values are generated according to a Gamma distribution with shape and scale parameters $2$ and $2$; in the second population, the $x$-values are generated from a log-normal distribution with parameters $0$ and $1.7$. The $x$-values are then shaped and scaled to lie between $1$ and $10$. \\

\noindent  Given the $x$-values, the values of the variables of interest are generated according to the following models:
    \begin{eqnarray} \label{sec6:eq1}
      \verb"linear": y_{1k} & = & \alpha_{10} + \alpha_{11} (x_k - \mu_x) + \sigma_1~\epsilon_k, \nonumber \\
      \verb"quadratic": y_{2k} & = & \alpha_{20} + \alpha_{21} (x_k - \mu_x)^2 + \sigma_2~\epsilon_k, \\
      \verb"exponential": y_{3k} & = & \exp\{\alpha_{30} + \alpha_{31}(x_k - \mu_x)\}+\sigma_3~\epsilon_k, \nonumber \\
      \verb"bump": y_{4k} & = & \alpha_{40} + \alpha_{41} (x_k - \mu_x)^2 - \alpha_{42} \exp\left\lbrace-\alpha_{43}( x_k - \mu_x)^2\right\rbrace + \sigma_4~\epsilon_k, \nonumber
    \end{eqnarray}
where $\mu_x$ is the population mean of $x$, and where $\epsilon_k$ follows a standard normal distribution $\mathcal{N}(0,1)$. The population mean $\mu_y$ and the population dispersion $S_y^2$ for the two populations and the four variables of interest are given in Table \ref{sim:tab1}.

\verbdef{\linear}{linear}
\verbdef{\quadratic}{quadratic}
\verbdef{\exponential}{exponential}
\verbdef{\bump}{bump}

\begin{table}[h!]
\caption{Population mean and population dispersion for two populations and four variables of interest} \label{sim:tab1}
	\begin{center}
	\begin{tabular}{|l|cc|cc|cc|cc|} \hline
                   & \multicolumn{2}{|c|}{\linear} & \multicolumn{2}{|c|}{\quadratic} & \multicolumn{2}{|c|}{\exponential} & \multicolumn{2}{|c|}{\bump}  \\
                   & $\mu_{y1}$ & $S_{y1}^2$ & $\mu_{y2}$ & $S_{y2}^2$ & $\mu_{y3}$ & $S_{y3}^2$ & $\mu_{y4}$ & $S_{y4}^2$ \\ \hline \hline
    Population $1$ & $10.1$ & $13.1$ & $11.7$ & $69.8$ & $10.3$ & $18.2$ & $12.3$ & $84.6$ \\
    Population $2$ & $10.1$ & $5.3$  & $8.8$ & $8.3$ & $9.9$ & $4.4$ & $5.2$ & $18.0$ \\ \hline
	\end{tabular}
	\end{center}
\end{table}

\noindent In each population, we computed inclusion probabilities proportional to the $x$-values, according to the formula
    \begin{eqnarray} \label{sec6:eq2}
      \pi_k & = & n \frac{x_k}{\sum_{l \in U} x_l},
    \end{eqnarray}
with $n=50,100$ or $200$. The range of inclusion probabilities is given in Table \ref{sim:tab2}. In some cases, equation (\ref{sec6:eq2}) leads to inclusion probabilities greater than $1$ for some units. In such case, the corresponding units are selected with certainty ($\pi_k=1$), and the other probabilities are recomputed. For the first population, $5$ units are selected with certainty with $n=200$. For the second population, $2$ units are selected with certainty with $n=100$ and $9$ units are selected with certainty with $n=100$. \\

\begin{table}[h!]
\caption{Range of inclusion probabilities proportional to $x$ for two populations and three sample sizes} \label{sim:tab2}
	\begin{center}
	\begin{tabular}{|l|cc|cc|cc|} \hline
                   & \multicolumn{2}{|c|}{$n=50$} & \multicolumn{2}{|c|}{$n=100$} & \multicolumn{2}{|c|}{$n=200$}  \\
                   & Min   & Max   & Min   & Max   & Min   & Max \\ \hline \hline
    Population $1$ & 0.030 & 0.305 & 0.060 & 0.609 & 0.122 & 1.000 \\
    Population $2$ & 0.079 & 0.791 & 0.160 & 1.000 & 0.325 & 1.000 \\ \hline
	\end{tabular}
	\end{center}
\end{table}

\noindent We select $B=1,000$ samples by means of randomized Chromy sampling, using the \verb"SURVEYSELECT" procedure. For each sample and each variable of interest, we compute the Horvitz-Thompson estimator $\hat{t}_{y\pi}$, and the Sen-Yates-Grundy variance estimator
    \begin{eqnarray}
      \hat{V}(\hat{t}_{y\pi}) & = & \frac{1}{2} \sum_{k \neq l \in S_{rc}} \frac{\pi_k \pi_l - \pi_{kl}^{rc}}{\pi_{kl}^{rc}} \left(\frac{y_k}{\pi_k}-\frac{y_l}{\pi_l} \right)^2,
    \end{eqnarray}
where the second-order inclusion probabilities are given by equation (\ref{sec5:eq1}). To evaluate the properties of this variance estimator, we compute the relative bias
    \begin{eqnarray} \label{rel:bias}
      RB\{\hat{V}(\hat{t}_{y\pi})\} & = & 100 \times \frac{B^{-1} \sum_{b=1}^B \hat{V}_b(\hat{t}_{y\pi b})-V(\hat{t}_{y\pi})}{V(\hat{t}_{y\pi})},
    \end{eqnarray}
where $\hat{V}_b(\hat{t}_{y\pi b})$ denotes the variance estimator in the b-th sample, and where $V(\hat{t}_{y\pi})$ is the exact variance, computed by using the exact second-order inclusion probabilities given in (\ref{sec5:eq1}). As a measure of stability, we use the Relative Root Mean Square Error
    \begin{eqnarray*} \label{instab}
      RRMSE\{\hat{V}(\hat{t}_{y\pi})\} & = & 100 \times \frac{\left[B^{-1}\sum_{b=1}^B \left\{\hat{V}_b(\hat{t}_{y\pi b})-V(\hat{t}_{y\pi}) \right\}^2\right]^{1/2}}{V(\hat{t}_{y\pi})}.
    \end{eqnarray*}
Finally, we compute the error rate of the normality-based confidence intervals with nominal one-tailed error rate of 2.5 \% in each tail. \\

\noindent The simulation results are given in Table \ref{sim:tab3}. The Sen-Yates-Grundy variance estimator is almost unbiased in all cases considered, except for the first population with $n=50$ where the variance estimator is slightly positively biased. The Relative Root Mean Square Error diminishes as $n$ increases, as expected. We note that the coverage rates are not well respected with $n=50$, which is likely due to the small sample size and to the instability of the variance estimator. When the sample size increases, the coverage rates become close to the nominal level.

\begin{sidewaystable}
\caption{Relative bias (in \% ), Relative Root Mean Square Error (in \% ) of the variance estimator and Coverage Rate of the normality-based confidence intervals for three populations and four variables of interest} \label{sim:tab3}
	\begin{center}
	\begin{tabular}{|l|l|ccc|ccc|ccc|} \hline
                   & & \multicolumn{3}{|c|}{$n=50$} & \multicolumn{3}{c|}{$n=100$} & \multicolumn{3}{c|}{$n=200$}  \\
                   & & RB & RMSE & Cov. Rate & RB & RMSE & Cov. Rate & RB & RMSE & Cov. Rate \\ \hline
                   & \verb"linear"      & -0.8 & 76  & 11.20 & -0.5 & 46 & 7.20 & -1.0 & 28 & 5.50 \\
    Population $1$ & \verb"quadratic"   & 3.2  & 118 & 13.50 & -0.6 & 66 & 8.60 & -2.0 & 37 & 7.00 \\
                   & \verb"exponential" & -0.4 & 85  & 11.40 & -0.9 & 50 & 7.10 & -1.1 & 30 & 6.50 \\
                   & \verb"bump"        & 3.7  & 103 & 11.30 & -0.4 & 59 & 8.30 & -2.1 & 32 & 7.00 \\ \hline
                   & \verb"linear"      & -0.7 & 56  & 8.20  & 0.6  & 31 & 7.30 & 0.0  & 17 & 4.80 \\
    Population $2$ & \verb"quadratic"   & -0.7 & 56  & 8.60  & 0.6  & 30 & 7.70 & 0.0  & 17 & 4.80 \\
                   & \verb"exponential" & -0.6 & 55  & 8.10  & 0.5  & 30 & 7.50 & 0.0  & 17 & 4.90 \\
                   & \verb"bump"        & -1.4 & 61  & 11.10 & 0.8  & 35 & 6.90 & 0.9  & 19 & 4.30 \\ \hline
	\end{tabular}
	\end{center}
\end{sidewaystable}

\section{Conclusion} \label{sec7}

\noindent In this paper, we have studied Chromy's sampling algorithm. We proved that it is equivalent to ordered pivotal sampling, which enables in particular computing the second-order inclusion probabilities for the randomized Chromy algorithm programmed in the \verb"SURVEYSELECT" procedure. The results in our simulation study confirm that the variance estimator based on the second-order probabilities computed from Deville's formulas show almost no bias for moderate sample sizes. \\

\noindent The number of computations for the second-order inclusion probabilities of randomized Chromy sampling is of order $N^3$. Formula (\ref{sec5:eq1}) is therefore tractable in case of a small population, for example when Chromy sampling is used to select a set of Primary Sampling Units in a multistage survey \citep[e.g.][]{rus:mye:dan:que:las:rib:19}. Otherwise, we may resort to a simulation-based approximation of equation (\ref{sec5:eq1}).

\bibliographystyle{apalike}

\newpage
\appendix

\section{Chromy sampling and pivotal sampling on an example} \label{appA}

\noindent We consider the population $U=\{1,2,3,4,5\}$ with $\pi=(0.4,0.8,0.5,0.6,0.7)$. The complete probability tree for Chromy sampling with parameter $\pi$ is given in Figure \ref{prob:tree:2}. For example, at the first step the unit $1$ is selected with probability $0.4$, and discarded with probability $0.6$. If unit $1$ is selected, then $\sum_{l=1}^1 I_l = V_1^I+1$. Since unit $2$ is a cross-border unit, we follow Step 2.b of Algorithm \ref{chr:samp} and unit $2$ is selected at the next step with probability $V_2^F/V_1^F = 0.2/0.4=1/2$. \\

\noindent The complete sampling design is
    \begin{eqnarray}
      S_c & = & \left\{ \begin{array}{lll}
                          \{1,2,4\} & \textrm{ with proba. } & 3/35, \\
                          \{1,2,5\} & \textrm{ with proba. } & 4/35, \\
                          \{1,3,4\} & \textrm{ with proba. } & 3/56, \\
                          \{1,3,5\} & \textrm{ with proba. } & 1/14, \\
                          \{1,4,5\} & \textrm{ with proba. } & 3/40, \\
                          \{2,3,4\} & \textrm{ with proba. } & 9/56, \\
                          \{2,3,5\} & \textrm{ with proba. } & 3/14, \\
                          \{2,4,5\} & \textrm{ with proba. } & 9/40.
                        \end{array}
      \right.
    \end{eqnarray}

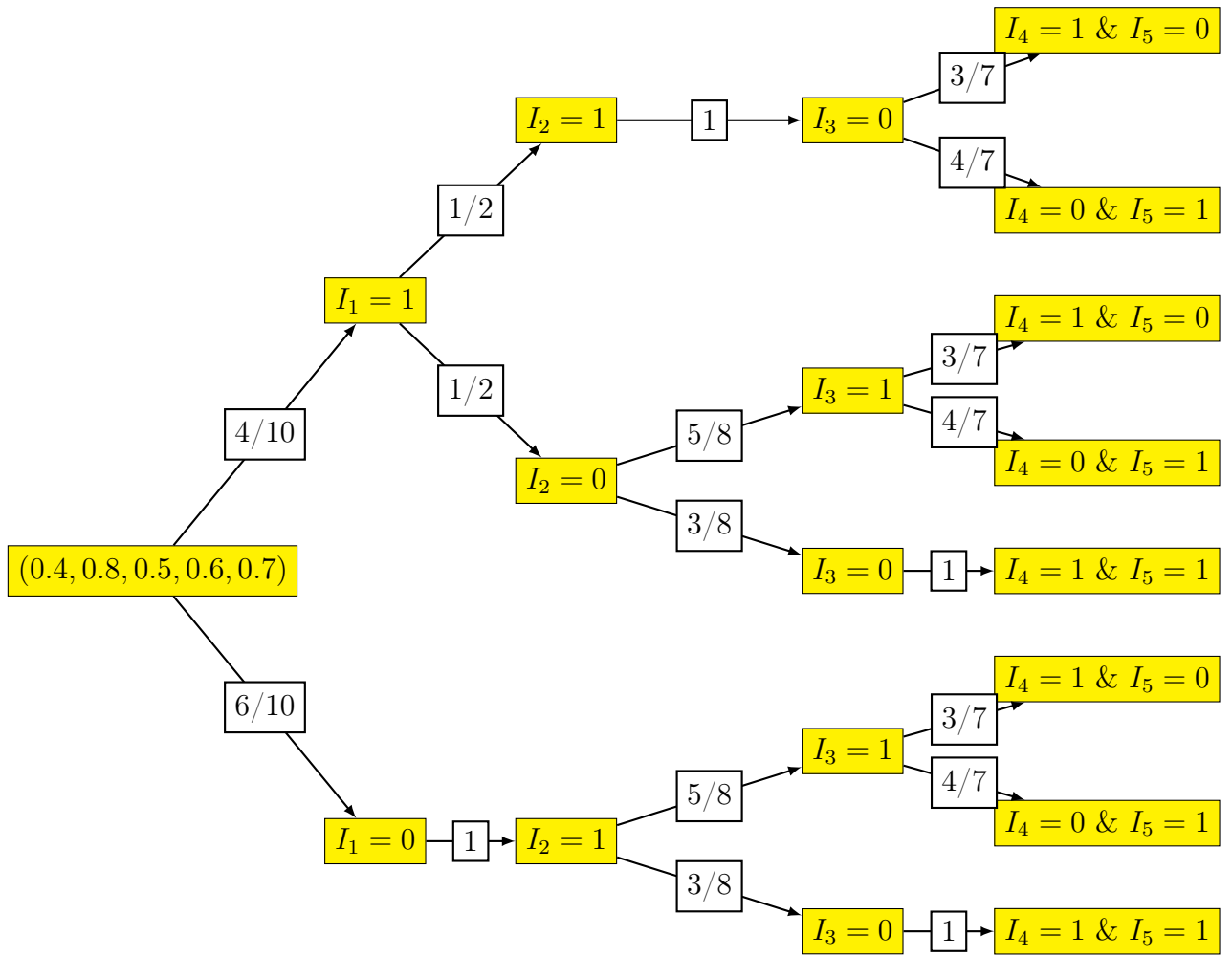
\begin{figure}
\begin{center}
\begin{tikzpicture}[xscale=0.5,yscale=0.5]
\tikzstyle{fleche}=[->,>=latex,thick]
\tikzstyle{noeud}=[fill=yellow,rectangle,draw]
\tikzstyle{feuille}=[fill=yellow,rectangle,draw]
\tikzstyle{etiquette}=[midway,fill=white,draw]
\def\DistanceInterNiveaux{3.5}
\def\DistanceInterFeuilles{2.5}
\def\NiveauA{(0)*\DistanceInterNiveaux}
\def\NiveauB{(1.75)*\DistanceInterNiveaux}
\def\NiveauC{(3.25)*\DistanceInterNiveaux}
\def\NiveauD{(5.5)*\DistanceInterNiveaux}
\def\NiveauE{(7.5)*\DistanceInterNiveaux}
\def\InterFeuilles{(-1)*\DistanceInterFeuilles}
\node[noeud] (R) at ({\NiveauA},{(6.5)*\InterFeuilles}) {$(0.4,0.8,0.5,0.6,0.7)$};
\node[noeud] (Ra) at ({\NiveauB},{(3.5)*\InterFeuilles}) {$I_1=1$};
\node[noeud] (Raa) at ({\NiveauC},{(1.5)*\InterFeuilles}) {$I_2=1$};
\node[noeud] (Raaa) at ({\NiveauD},{(1.5)*\InterFeuilles}) {$I_3=0$};
\node[feuille] (Raaaa) at ({\NiveauE},{(0.5)*\InterFeuilles}) {$I_4=1~\&~I_5=0$};
\node[feuille] (Raaab) at ({\NiveauE},{(2.5)*\InterFeuilles}) {$I_4=0~\&~I_5=1$};
\node[noeud] (Rab) at ({\NiveauC},{(5.5)*\InterFeuilles}) {$I_2=0$};
\node[noeud] (Raba) at ({\NiveauD},{(4.5)*\InterFeuilles}) {$I_3=1$};
\node[feuille] (Rabaa) at ({\NiveauE},{(3.7)*\InterFeuilles}) {$I_4=1~\&~I_5=0$};
\node[feuille] (Rabab) at ({\NiveauE},{(5.3)*\InterFeuilles}) {$I_4=0~\&~I_5=1$};
\node[noeud] (Rabb) at ({\NiveauD},{(6.5)*\InterFeuilles}) {$I_3=0$};
\node[feuille] (Rabba) at ({\NiveauE},{(6.5)*\InterFeuilles}) {$I_4=1~\&~I_5=1$};
\node[noeud] (Rb) at ({\NiveauB},{(9.5)*\InterFeuilles}) {$I_1=0$};
\node[noeud] (Rba) at ({\NiveauC},{(9.5)*\InterFeuilles}) {$I_2=1$};
\node[noeud] (Rbaa) at ({\NiveauD},{(8.5)*\InterFeuilles}) {$I_3=1$};
\node[feuille] (Rbaaa) at ({\NiveauE},{(7.7)*\InterFeuilles}) {$I_4=1~\&~I_5=0$};
\node[feuille] (Rbaab) at ({\NiveauE},{(9.3)*\InterFeuilles}) {$I_4=0~\&~I_5=1$};
\node[noeud] (Rbab) at ({\NiveauD},{(10.5)*\InterFeuilles}) {$I_3=0$};
\node[feuille] (Rbaba) at ({\NiveauE},{(10.5)*\InterFeuilles}) {$I_4=1~\&~I_5=1$};
\draw[fleche] (R)--(Ra) node[etiquette] {4/10};
\draw[fleche] (Ra)--(Raa) node[etiquette] {1/2};
\draw[fleche] (Raa)--(Raaa) node[etiquette] {1};
\draw[fleche] (Raaa)--(Raaaa) node[etiquette] {3/7};
\draw[fleche] (Raaa)--(Raaab) node[etiquette] {4/7};
\draw[fleche] (Ra)--(Rab) node[etiquette] {1/2};
\draw[fleche] (Rab)--(Raba) node[etiquette] {5/8};
\draw[fleche] (Raba)--(Rabaa) node[etiquette] {3/7};
\draw[fleche] (Raba)--(Rabab) node[etiquette] {4/7};
\draw[fleche] (Rab)--(Rabb) node[etiquette] {3/8};
\draw[fleche] (Rabb)--(Rabba) node[etiquette] {1};
\draw[fleche] (R)--(Rb) node[etiquette] {6/10};
\draw[fleche] (Rb)--(Rba) node[etiquette] {1};
\draw[fleche] (Rba)--(Rbaa) node[etiquette] {5/8};
\draw[fleche] (Rbaa)--(Rbaaa) node[etiquette] {3/7};
\draw[fleche] (Rbaa)--(Rbaab) node[etiquette] {4/7};
\draw[fleche] (Rba)--(Rbab) node[etiquette] {3/8};
\draw[fleche] (Rbab)--(Rbaba) node[etiquette] {1};
\end{tikzpicture}
\end{center}
\caption{Probability tree for Chromy sampling on a population $U$ of size $N=5$} \label{prob:tree:2}
\end{figure}

\noindent Now, we consider ordered pivotal sampling on the same population $U$ with the same parameter $\pi$. The complete probability tree for pivotal sampling is given in Figure \ref{prob:tree:1}. For example, at the first step, the units $1$ and $2$ fight with respective probabilities $0.4$ and $0.8$. With probability $(1-0.8)/(2-0.4-0.8)=1/4$, unit $1$ is selected and unit $2$ gets the residual probability $0.2$, and with the complementary probability unit $2$ is selected and unit $1$ gets the residual probability $0.2$. In the first case, unit $2$ faces unit $3$ with respective probabilities $0.2$ and $0.5$. With probability $0.2/(0.2+0.5)=2/7$, unit $2$ gets the sum of the probabilities and unit $3$ is discarded, and with the complementary probability unit $3$ gets the sum of the probabilities and unit $2$ is discarded. \\

\noindent It follows from straightforward computations that the complete sampling design is the same as for Chromy sampling.

\begin{figure}
\begin{center}
\begin{tikzpicture}[xscale=0.5,yscale=0.5]
\tikzstyle{fleche}=[->,>=latex,thick]
\tikzstyle{noeud}=[fill=yellow,rectangle,draw]
\tikzstyle{feuille}=[fill=yellow,rectangle,draw]
\tikzstyle{etiquette}=[midway,fill=white,draw]
\def\DistanceInterNiveaux{3.5}
\def\DistanceInterFeuilles{2.5}
\def\NiveauA{(0)*\DistanceInterNiveaux}
\def\NiveauB{(1.75)*\DistanceInterNiveaux}
\def\NiveauC{(3.25)*\DistanceInterNiveaux}
\def\NiveauD{(5.5)*\DistanceInterNiveaux}
\def\NiveauE{(7.5)*\DistanceInterNiveaux}
\def\InterFeuilles{(-1)*\DistanceInterFeuilles}
\node[noeud] (R) at ({\NiveauA},{(7.5)*\InterFeuilles}) {$(0.4,0.8,0.5,0.6,0.7)$};
\node[noeud] (Ra) at ({\NiveauB},{(3.5)*\InterFeuilles}) {$(1,0.2,0.5,0.6,0.7)$};
\node[noeud] (Raa) at ({\NiveauC},{(1.5)*\InterFeuilles}) {$(1,0.7,0,0.6,0.7)$};
\node[noeud] (Raaa) at ({\NiveauD},{(0.5)*\InterFeuilles}) {$(1,1,0,0.3,0.7)$};
\node[feuille] (Raaaa) at ({\NiveauE},{(0)*\InterFeuilles}) {$(1,1,0,1,0)$};
\node[feuille] (Raaab) at ({\NiveauE},{(1)*\InterFeuilles}) {$(1,1,0,0,1)$};
\node[noeud] (Raab) at ({\NiveauD},{(2.5)*\InterFeuilles}) {$(1,0.3,0,1,0.7)$};
\node[feuille] (Raaba) at ({\NiveauE},{(2)*\InterFeuilles}) {$(1,1,0,1,0)$};
\node[feuille] (Raabb) at ({\NiveauE},{(3)*\InterFeuilles}) {$(1,0,0,1,1)$};
\node[noeud] (Rab) at ({\NiveauC},{(5.5)*\InterFeuilles}) {$(1,0,0.7,0.6,0.7)$};
\node[noeud] (Raba) at ({\NiveauD},{(4.5)*\InterFeuilles}) {$(1,0,1,0.3,0.7)$};
\node[feuille] (Rabaa) at ({\NiveauE},{(4)*\InterFeuilles}) {$(1,0,1,1,0)$};
\node[feuille] (Rabab) at ({\NiveauE},{(5)*\InterFeuilles}) {$(1,0,1,0,1)$};
\node[noeud] (Rabb) at ({\NiveauD},{(6.5)*\InterFeuilles}) {$(1,0,0.3,1,0.7)$};
\node[feuille] (Rabba) at ({\NiveauE},{(6)*\InterFeuilles}) {$(1,0,1,1,0)$};
\node[feuille] (Rabbb) at ({\NiveauE},{(7)*\InterFeuilles}) {$(1,0,0,1,1)$};
\node[noeud] (Rb) at ({\NiveauB},{(11.5)*\InterFeuilles}) {$(0.2,1,0.5,0.6,0.7)$};
\node[noeud] (Rba) at ({\NiveauC},{(9.5)*\InterFeuilles}) {$(0.7,1,0,0.6,0.7)$};
\node[noeud] (Rbaa) at ({\NiveauD},{(8.5)*\InterFeuilles}) {$(1,1,0,0.3,0.7)$};
\node[feuille] (Rbaaa) at ({\NiveauE},{(8)*\InterFeuilles}) {$(1,1,0,1,0)$};
\node[feuille] (Rbaab) at ({\NiveauE},{(9)*\InterFeuilles}) {$(1,1,0,0,1)$};
\node[noeud] (Rbab) at ({\NiveauD},{(10.5)*\InterFeuilles}) {$(0.3,1,0,1,0.7)$};
\node[feuille] (Rbaba) at ({\NiveauE},{(10)*\InterFeuilles}) {$(1,1,0,1,0)$};
\node[feuille] (Rbabb) at ({\NiveauE},{(11)*\InterFeuilles}) {$(0,1,0,1,1)$};
\node[noeud] (Rbb) at ({\NiveauC},{(13.5)*\InterFeuilles}) {$(0,1,0.7,0.6,0.7)$};
\node[noeud] (Rbba) at ({\NiveauD},{(12.5)*\InterFeuilles}) {$(0,1,1,0.3,0.7)$};
\node[feuille] (Rbbaa) at ({\NiveauE},{(12)*\InterFeuilles}) {$(0,1,1,1,0)$};
\node[feuille] (Rbbab) at ({\NiveauE},{(13)*\InterFeuilles}) {$(0,1,1,0,1)$};
\node[noeud] (Rbbb) at ({\NiveauD},{(14.5)*\InterFeuilles}) {$(0,1,0.3,1,0.7)$};
\node[feuille] (Rbbba) at ({\NiveauE},{(14)*\InterFeuilles}) {$(0,1,1,1,0)$};
\node[feuille] (Rbbbb) at ({\NiveauE},{(15)*\InterFeuilles}) {$(0,1,0,1,1)$};
\draw[fleche] (R)--(Ra) node[etiquette] {1/4};
\draw[fleche] (Ra)--(Raa) node[etiquette] {2/7};
\draw[fleche] (Raa)--(Raaa) node[etiquette] {4/7};
\draw[fleche] (Raaa)--(Raaaa) node[etiquette] {3/10};
\draw[fleche] (Raaa)--(Raaab) node[etiquette] {7/10};
\draw[fleche] (Raa)--(Raab) node[etiquette] {3/7};
\draw[fleche] (Raab)--(Raaba) node[etiquette] {3/10};
\draw[fleche] (Raab)--(Raabb) node[etiquette] {7/10};
\draw[fleche] (Ra)--(Rab) node[etiquette] {5/7};
\draw[fleche] (Rab)--(Raba) node[etiquette] {4/7};
\draw[fleche] (Raba)--(Rabaa) node[etiquette] {3/10};
\draw[fleche] (Raba)--(Rabab) node[etiquette] {7/10};
\draw[fleche] (Rab)--(Rabb) node[etiquette] {3/7};
\draw[fleche] (Rabb)--(Rabba) node[etiquette] {3/10};
\draw[fleche] (Rabb)--(Rabbb) node[etiquette] {7/10};
\draw[fleche] (R)--(Rb) node[etiquette] {3/4};
\draw[fleche] (Rb)--(Rba) node[etiquette] {2/7};
\draw[fleche] (Rba)--(Rbaa) node[etiquette] {4/7};
\draw[fleche] (Rbaa)--(Rbaaa) node[etiquette] {3/10};
\draw[fleche] (Rbaa)--(Rbaab) node[etiquette] {7/10};
\draw[fleche] (Rba)--(Rbab) node[etiquette] {3/7};
\draw[fleche] (Rbab)--(Rbaba) node[etiquette] {3/10};
\draw[fleche] (Rbab)--(Rbabb) node[etiquette] {7/10};
\draw[fleche] (Rb)--(Rbb) node[etiquette] {5/7};
\draw[fleche] (Rbb)--(Rbba) node[etiquette] {4/7};
\draw[fleche] (Rbba)--(Rbbaa) node[etiquette] {3/10};
\draw[fleche] (Rbba)--(Rbbab) node[etiquette] {7/10};
\draw[fleche] (Rbb)--(Rbbb) node[etiquette] {3/7};
\draw[fleche] (Rbbb)--(Rbbba) node[etiquette] {3/10};
\draw[fleche] (Rbbb)--(Rbbbb) node[etiquette] {7/10};
\end{tikzpicture}
\end{center}
\caption{Probability tree for ordered pivotal sampling on a clustered population $U_c$} \label{prob:tree:1}
\end{figure}
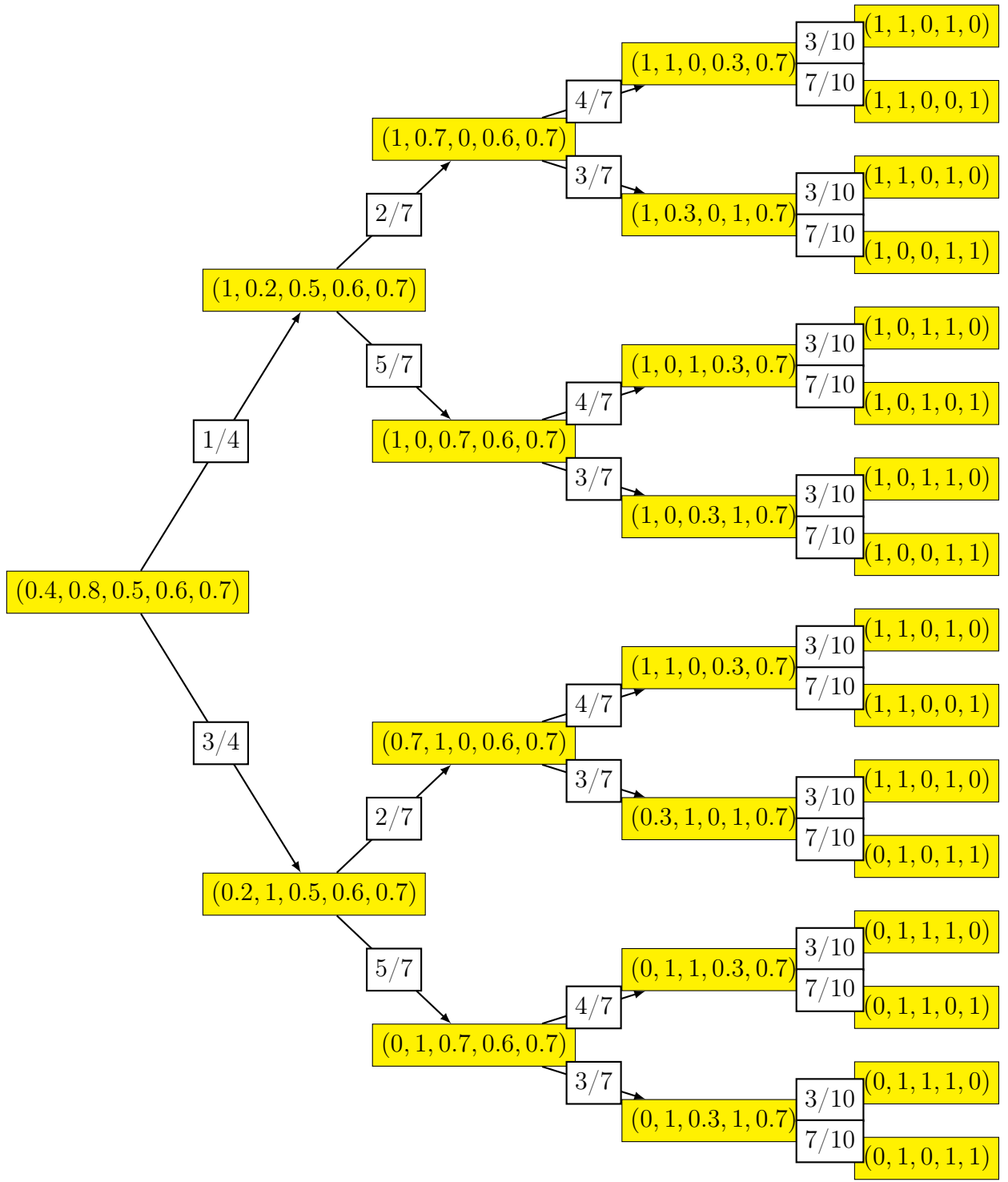

\section{Proof of Proposition \ref{prop1}} \label{appB}

\noindent The proof is by induction. For $k=1$, we have $I_1 \in \{0,1\}$ and $V_1^I=0$, so the property holds. Suppose that the property holds at $k-1$, namely
\begin{eqnarray} \label{pprop1:eq1}
V_{k-1}^I & \leq \sum_{l=1}^{k-1} I_l & \leq V_{k-1}^I+1.
\end{eqnarray}
First note that if $k$ is a cross-border unit, we have $V_{k}^I=V_{k-1}^I+1$ and if not, we have $V_{k}^I=V_{k-1}^I$.
\begin{itemize}
  \item Suppose that $\sum_{l=1}^{k-1} I_l=V_{k-1}^I$. If $k$ is a non cross-border unit, we have
    \begin{eqnarray*}
    V_{k-1}^I \leq \sum_{l=1}^{k-1} I_l + I_k \leq V_{k-1}^I+1 & \Leftrightarrow &
    V_{k}^I \leq \sum_{l=1}^{k} I_l \leq V_{k}^I+1.
    \end{eqnarray*}
  If $k$ is a cross-border unit, we obtain from Algorithm \ref{chr:samp} that $I_k=1$, and $\sum_{l=1}^{k} I_l=V_{k-1}^I+1=V_{k}^I$.
  \item Suppose that $\sum_{l=1}^{k-1} I_l=V_{k-1}^I+1$. If $k$ is a non cross-border unit, then from Algorithm \ref{chr:samp}, we have $I_k=0$ and $\sum_{l=1}^{k} I_l=V_{k-1}^I+1=V_{k}^I$. If $k$ is a cross-border unit, we obtain from $I_k \in \{0,1\}$
    \begin{eqnarray*}
    V_{k-1}^I+1 \leq \sum_{l=1}^{k-1} I_l + I_k \leq V_{k-1}^I+2 & \Leftrightarrow & V_{k}^I \leq \sum_{l=1}^{k} I_l \leq V_{k}^I+1.
    \end{eqnarray*}
\end{itemize}

\newpage

\section{Proof of Theorem 1} \label{appC}

\noindent The proof proceeds in two main steps. We first prove that Chromy sampling may be alternatively seen as the result of a two-stage sampling procedure, inside a population $U_c$ of clusters which is introduced in Section \ref{appC1}. We then consider in Section \ref{appC2} the ordered sample, which is given by the selected units ranked with respect to the natural order in the population, and we give the transition probabilities between the selected units. These results are used in Section \ref{appC3} to prove Theorem \ref{theo1}.

    \subsection{Clustered population} \label{appC1}

    \noindent The $N$ sampling units in the population $U$ are grouped to obtain a population $U_c=\{u_1,\ldots,u_{2n-1}\}$ of clusters. There are the clusters associated to the cross-border units ($n-1$ singletons), denoted as $u_{2i}$ with associated probability $\phi_{2i}=\pi_{k_i}$. There are the $n$ clusters of non cross-border units that are between two consecutive integers, denoted as $u_{2i-1}$ with associated probability $\phi_{2i-1}=V_{k_i-1}-V_{k_{i-1}}$. The vector of inclusion probabilities in the population $U_c$ is denoted as $\phi=\left(\phi_1,\ldots,\phi_{2n-1}\right)'$. For illustration, useful quantities for population $U_c$ are presented in Figure \ref{nota:clust}.

        \begin{center}
        \begin{figure}[htb!]

        \setlength{\unitlength}{0.03cm}

        \begin{picture}(600,200)

        \put(30,130){\line(1,0){350}}

        \put(20,130){\line(1,0){5}}
        \put(10,130){\line(1,0){5}}
        \put(0,130){\line(1,0){5}}

        \put(385,130){\line(1,0){5}}
        \put(395,130){\line(1,0){5}}
        \put(405,130){\line(1,0){5}}

        \put(100,120){\line(0,1){20}} \put(100,145){\makebox(0,0)[b]{$i-1$}}
        \put(200,120){\line(0,1){20}} \put(200,145){\makebox(0,0)[b]{$i$}}
        \put(300,120){\line(0,1){20}} \put(300,145){\makebox(0,0)[b]{$i+1$}}

        \put(60,125){\line(0,1){10}}
        \put(120,125){\line(0,1){10}}
        \put(170,125){\line(0,1){10}}
        \put(230,125){\line(0,1){10}}
        \put(280,125){\line(0,1){10}}
        \put(340,125){\line(0,1){10}}

        \put(60,85){\vector(1,0){40}}  \put(100,85){\vector(-1,0){40}} \put(80,75){\makebox(0,0)[b]{\tiny{$a_{i-1}$}}}
        \put(100,85){\vector(1,0){20}} \put(120,85){\vector(-1,0){20}} \put(110,75){\makebox(0,0)[b]{\tiny{$b_{i-1}$}}}

        \put(170,85){\vector(1,0){30}}  \put(200,85){\vector(-1,0){30}} \put(185,75){\makebox(0,0)[b]{\tiny{$a_{i}$}}}
        \put(200,85){\vector(1,0){30}} \put(230,85){\vector(-1,0){30}} \put(215,75){\makebox(0,0)[b]{\tiny{$b_{i}$}}}

        \put(280,85){\vector(1,0){20}}  \put(300,85){\vector(-1,0){20}} \put(290,75){\makebox(0,0)[b]{\tiny{$a_{i+1}$}}}
        \put(300,85){\vector(1,0){40}} \put(340,85){\vector(-1,0){40}} \put(320,75){\makebox(0,0)[b]{\tiny{$b_{i+1}$}}}

        \put(60,170){\vector(1,0){60}}  \put(120,170){\vector(-1,0){60}} \put(90,175){\makebox(0,0)[b]{\tiny{$\phi_{2i-2}$}}}
        \put(120,170){\vector(1,0){50}}  \put(170,170){\vector(-1,0){50}} \put(145,175){\makebox(0,0)[b]{\tiny{$\phi_{2i-1}$}}}
        \put(170,170){\vector(1,0){60}}  \put(230,170){\vector(-1,0){60}} \put(200,175){\makebox(0,0)[b]{\tiny{$\phi_{2i}$}}}
        \put(230,170){\vector(1,0){50}}  \put(280,170){\vector(-1,0){50}} \put(255,175){\makebox(0,0)[b]{\tiny{$\phi_{2i+1}$}}}
        \put(280,170){\vector(1,0){60}}  \put(340,170){\vector(-1,0){60}} \put(310,175){\makebox(0,0)[b]{\tiny{$\phi_{2i+2}$}}}


        \put(30,65){\line(1,0){90}} \put(120,65){\line(0,1){10}} \put(60,50){\makebox(0,0)[b]{$U_{i-1}$}}
        \put(20,65){\line(1,0){5}}
        \put(10,65){\line(1,0){5}}
        \put(0,65){\line(1,0){5}}

        \put(60,30){\line(1,0){170}} \put(60,30){\line(0,1){10}} \put(230,30){\line(0,1){10}} \put(150,15){\makebox(0,0)[b]{$U_{i}$}}
        \put(170,65){\line(1,0){170}} \put(170,65){\line(0,1){10}} \put(340,65){\line(0,1){10}} \put(250,50){\makebox(0,0)[b]{$U_{i+1}$}}

        \put(280,30){\line(1,0){100}} \put(280,30){\line(0,1){10}} \put(355,15){\makebox(0,0)[b]{$U_{i+2}$}}
        \put(385,30){\line(1,0){5}}
        \put(395,30){\line(1,0){5}}
        \put(405,30){\line(1,0){5}}
        \end{picture}

        \caption{Inclusion probabilities and cross-border units in microstrata $U_{i}$ and $U_{i+1}$
        for population $U_c$} \label{nota:clust}
        \end{figure}
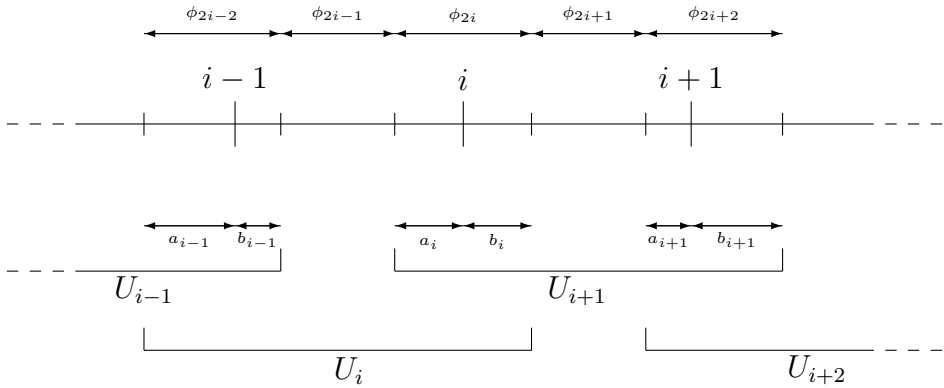
        \end{center}

        \begin{prop} \label{prop2}
          Chromy sampling with parameter $\pi$ in $U$ may be performed by two-stage sampling, with:
           \begin{enumerate}
             \item a first-stage selection of a sample $S_c$ of $n$ clusters by means of Chromy sampling with parameter $\phi$ in the population $U_c$,
             \item an independent second-stage selection inside each $u_i \in S_c$ of a sample $S_i$ of size $1$, with unit $k \in u_i$ selected with a probability $\pi_k/\phi_i$.
           \end{enumerate}
        \end{prop}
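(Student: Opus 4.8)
The plan is to prove that Chromy sampling with parameter $\pi$ on $U$ and the two-stage procedure induce the same probability on every subset $s\subseteq U$ with $|s|=n$; since both are sampling designs, this establishes their equivalence. I would obtain this by writing each such probability as a product of the elementary transition probabilities prescribed by the relevant algorithm and matching the two products block by block.

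I would first record the arithmetic linking the cumulated probabilities. Writing $W_j=\sum_{l=1}^{j}\phi_l$ and, for a configuration $I$, $m_k=\sum_{l=1}^{k}I_l$ with $m_0=0$, a short telescoping computation using $\phi_{2i-1}=V_{k_i-1}-V_{k_{i-1}}$ and $\phi_{2i}=\pi_{k_i}$ yields $W_{2i-1}=V_{k_i-1}$ and $W_{2i}=V_{k_i}$ for every $i$. In particular $\sum_j\phi_j=W_{2n-1}=V_N=n$, so Chromy sampling with parameter $\phi$ on $U_c$ is well defined and of fixed size $n$, each singleton $u_{2i}$ is the cross-border cluster of $U_c$ at level $i$, and the quantities $a_i,b_i$ agree whether read off $U$ or off $U_c$. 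Since the clusters $u_j$ partition $U$ into consecutive blocks, a subset $s$ with $|s|=n$ can be produced by the two-stage procedure only when it meets each $u_j$ in at most one point; for such $s$, writing $C(s)$ for the (then $n$-element) set of hit clusters and $k_j^{\ast}$ for the unit of $u_j\cap s$, its two-stage probability equals the probability that first-stage Chromy sampling with parameter $\phi$ on $U_c$ selects exactly $C(s)$, times $\prod_{j\in C(s)}\pi_{k_j^{\ast}}/\phi_j$. On the other side, the probability of $s$ under Chromy sampling on $U$ is the product over $k=1,\dots,N$ of the transition probability assigned to $I_k$ given $m_{k-1}$ by Algorithm~\ref{chr:samp}, evaluated along the trajectory $(m_k)$ determined by $s$, with the convention that this product is $0$ as soon as the trajectory is infeasible in the sense of Proposition~\ref{prop1}.

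The core of the argument is then to group the $N$ transition factors of the latter product according to the block $u_j$ containing the unit, and to verify that each group reproduces the corresponding two-stage factor. For a singleton cross-border block $u_{2i}=\{k_i\}$, the unit is handled by Step~2.b, and its single transition probability ($1$ if $m_{k_i-1}=i-1$, and $b_i/(1-a_i)$ if $m_{k_i-1}=i$) is literally the Chromy-on-$U_c$ transition for the cross-border cluster $u_{2i}$, with a trivial within-cluster factor $1$. For a non-cross-border block $u_{2i-1}=\{k_{i-1}+1,\dots,k_i-1\}$, whose units are handled by Step~2.a, Proposition~\ref{prop1} forces $m_{k_{i-1}}\in\{i-1,i\}$; when $m_{k_{i-1}}=i$ every transition in the block is $0$, in agreement with $u_{2i-1}$ being unselectable on $U_c$ in that case, and when $m_{k_{i-1}}=i-1$ the in-block factors are $(i-V_k)/(i-V_{k-1})$ before the hit, $\pi_{k^{\ast}}/(i-V_{k^{\ast}-1})$ at the hit $k^{\ast}$, and $1$ after it. Since the pre-hit chunk telescopes, when $u_{2i-1}$ is hit the block product collapses to
\[
\left(\prod_{k=k_{i-1}+1}^{k^{\ast}-1}\frac{i-V_k}{i-V_{k-1}}\right)\frac{\pi_{k^{\ast}}}{i-V_{k^{\ast}-1}}=\frac{\pi_{k^{\ast}}}{i-V_{k_{i-1}}}=\frac{\phi_{2i-1}}{1-b_{i-1}}\cdot\frac{\pi_{k^{\ast}}}{\phi_{2i-1}},
\]
which is exactly the probability that Chromy on $U_c$ selects $u_{2i-1}$ times the within-cluster probability of $k^{\ast}$; when $u_{2i-1}$ is not hit, the block product telescopes to $(i-V_{k_i-1})/(i-V_{k_{i-1}})=1-\phi_{2i-1}/(1-b_{i-1})$, the probability of not selecting $u_{2i-1}$ on $U_c$. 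Multiplying the matched blocks over $j=1,\dots,2n-1$ gives equal probabilities for $s$ under the two procedures (both vanishing on the same configurations, by the same argument applied to a block hit twice), which is the claim.

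I expect the main obstacle to be the bookkeeping in the grouping step: keeping track of which branch of Algorithm~\ref{chr:samp} applies to each unit, converting ``number of units already selected'' into ``number of clusters already selected'' via the identity that $m_{k_i-1}$ equals the number of clusters among $u_1,\dots,u_{2i-1}$ that meet $s$, and handling the end-of-population boundary -- where $V_N=n$ is an integer, so the last unit $N$ is mechanically treated by Step~2.b and the last block $u_{2n-1}$ becomes forced rather than probabilistic -- along with the degenerate cases of an empty non-cross-border block or a vanishing fractional part. Once the telescoping identity for $\prod_k(i-V_k)/(i-V_{k-1})$ is isolated, the remaining checks are routine.
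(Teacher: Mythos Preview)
Your proposal is correct and reaches the result by a route closely related to, but organized differently from, the paper's. The paper proves the equivalence by matching conditional transition probabilities unit by unit: for the two-stage procedure it computes $Pr(J_k=1\mid \text{history})$ for each $k$ (treating cross-border and non-cross-border units separately) and checks that this coincides with the Chromy transition probability $Pr(I_k=1\mid \text{history})$ from Algorithm~\ref{chr:samp}. You instead fix a sample $s$, write its Chromy-on-$U$ probability as a product of $N$ transition factors, group these by cluster, and telescope within each non-cross-border block to recover the first-stage cluster-selection probability times the second-stage within-cluster factor. Both arguments rest on the same telescoping identity $\prod_k (i-V_k)/(i-V_{k-1})$ inside a non-cross-border block; your organization avoids having to unroll the two-stage procedure into a sequential one, while the paper's makes explicit that the two-stage procedure inherits the Markov structure of Chromy sampling. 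One small imprecision in your sketch: when $m_{k_{i-1}}=i$ and $s$ does \emph{not} hit $u_{2i-1}$, the block product is $1$ (every non-selection transition equals $1$), matching the cluster-level probability $1$ of not selecting $u_{2i-1}$; your phrase ``every transition in the block is $0$'' should be read as ``every \emph{selection} probability is $0$''.
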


        \begin{proof}
        \noindent It is sufficient to prove that the transition probabilities given in Algorithm \ref{chr:samp} are the same under Chromy sampling with parameter $\pi$ and under the two-stage sampling procedure. We use the following notation: for any unit $k \in U$, recall that $I_k$ is the sample membership indicator under Chromy sampling with parameter $\pi$; for any cluster $u_i \in U_c$, $J_{c,i}$ is the sample membership indicator under Chromy sampling with parameter $\phi$; for any unit $k \in U$, $J_k$ is the sample membership indicator under the two-stage procedure. We first note that, by definition of the two-stage procedure, we have
            \begin{eqnarray} \label{pprop2:eq1}
              J_{c,i} & = & \sum_{k \in u_i} J_k \textrm{ for any } u_i \in U_c.
            \end{eqnarray}

        \noindent We now consider the case when $k \in U$ is a cross-border unit, $k_i$ say. The corresponding cluster is $u_{2i}$, and in such case
            \begin{eqnarray} \label{pprop2:eq2}
              J_{k_i} = 1 & \Leftrightarrow & J_{c,2i} = 1.
            \end{eqnarray}
        We obtain successively
            \begin{eqnarray} \label{pprop2:eq3}
              Pr\left(J_{k_i}=1 \left| \sum_{l=1}^{k_i-1} J_l \right.\right) & = & Pr\left(J_{k_i}=1 \left| \sum_{j=1}^{2i-1} J_{c,j} \right.\right) \textrm{ from equation (\ref{pprop2:eq1})} \nonumber \\
                                                     & = & Pr\left(J_{c,2i}=1 \left| \sum_{j=1}^{2i-1} J_{c,j} \right.\right) \textrm{ from equation (\ref{pprop2:eq2})} \nonumber \\
                                                     & = & \left\{\begin{array}{ll}
                                                                    1 & \textrm{if } \sum_{j=1}^{2i-1} J_{c,j} = i-1,\\
                                                                    \frac{b_i}{1-a_i} & \textrm{if } \sum_{j=1}^{2i-1} J_{c,j} = i,\\
                                                                  \end{array} \right. \nonumber \\
                                                     &   & \textrm{ from Step 2.b in Algorithm \ref{chr:samp}} \nonumber \\
                                                     & = & Pr\left(I_{k_i}=1 \left| \sum_{l=1}^{k_i-1} I_k \right.\right),
            \end{eqnarray}
        where the last line is obtained again from Algorithm \ref{chr:samp}. \\

        \noindent We now consider the case when $k$ is not a cross-border unit, and belongs to the cluster $u_{2i-1}$, say. We begin by computing the quantities
            \begin{eqnarray*} 
              Pr\left(J_k=1\left|\sum_{j=1}^{2i-2} J_{c,j} , \sum_{l=k_{i-1}+1}^{k-1} J_l \right.\right).
            \end{eqnarray*}
        Note that from Proposition \ref{prop1}, $\sum_{j=1}^{2i-2} J_{c,j}$ may only take the values $i-1$ and $i$, and since we select at most one unit $l$ inside $u_{2i-1}$, $\sum_{l=k_{i-1}+1}^{k-1} J_l$ may only take the values $0$ and $1$. If $\sum_{l=k_{i-1}+1}^{k-1} J_l=1$, we have $J_k=0$ since we select at most one unit $l$ inside $u_{2i-1}$. Therefore
            \begin{eqnarray} \label{pprop2:eq5}
              Pr\left(J_k=1\left|\sum_{j=1}^{2i-2} J_{c,j}=i-1 , \sum_{l=k_{i-1}+1}^{k-1} J_l=1 \right.\right)  & = & 0.
            \end{eqnarray}
        If $\sum_{j=1}^{2i-2} J_{c,j}=i$, Algorithm \ref{chr:samp} implies that the cluster $u_{2i-1}$ and therefore $k$ may not be selected. Therefore
            \begin{eqnarray} \label{pprop2:eq6}
              Pr\left(J_k=1\left|\sum_{j=1}^{2i-2} J_{c,j}=i , \sum_{l=k_{i-1}+1}^{k-1} J_l=0 \right.\right)  & = & 0.
            \end{eqnarray}
        For the same reason, we may not have simultaneously $\sum_{j=1}^{2i-2} J_{c,j}=i$ and $\sum_{l=k_{i-1}+1}^{k-1} J_l=1$. Finally, we have
            \begin{eqnarray} \label{pprop2:eq7}
               &  & Pr\left(J_k=1\left|\sum_{j=1}^{2i-2} J_{c,j}=i-1 , \sum_{l=k_{i-1}+1}^{k-1} J_l=0 \right.\right) \nonumber \\
              & =  & \frac{Pr\left(J_k=1,\sum_{l=k_{i-1}+1}^{k-1} J_l=0 \left| \sum_{j=1}^{2i-2} J_{c,j}=i-1 \right.\right)}{Pr\left(\sum_{l=k_{i-1}+1}^{k-1} J_l=0 \left| \sum_{j=1}^{2i-2} J_{c,j}=i-1 \right.\right)} \nonumber \\
              & = & \frac{Pr\left(J_k=1 \left| \sum_{j=1}^{2i-2} J_{c,j}=i-1 \right.\right)}{Pr\left(\sum_{l=k_{i-1}+1}^{k-1} J_l=0 \left| \sum_{j=1}^{2i-2} J_{c,j}=i-1 \right.\right)},
            \end{eqnarray}
        where the last line in (\ref{pprop2:eq7}) follows from the fact that if $J_k=1$, we necessarily have $\sum_{l=k_{i-1}+1}^{k-1} J_l=0$. We compute the numerator and the denominator in (\ref{pprop2:eq7}) separately. The numerator is
            \begin{eqnarray*} 
            & & Pr\left(J_k=1 \left| \sum_{j=1}^{2i-2} J_{c,j}=i-1 \right.\right) \nonumber \\
            & = & Pr\left(J_{c,2i-1}=1 \left| \sum_{j=1}^{2i-2} J_{c,j}=i-1 \right.\right) Pr\left(J_k=1 \left| \sum_{j=1}^{2i-2} J_{c,j}=i-1, J_{c,2i-1}=1 \right.\right) \nonumber \\
            & = & Pr\left(J_{c,2i-1}=1 \left| \sum_{j=1}^{2i-2} J_{c,j}=i-1 \right.\right) Pr\left(J_k=1 \left| J_{c,2i-1}=1 \right.\right).
            \end{eqnarray*}
        From Algorithm \ref{chr:samp}, we have $Pr\left(J_{c,2i-1}=1 \left| \sum_{j=1}^{2i-2} J_{c,j}=i-1 \right.\right)=\frac{1-b_{i-1}-a_i}{1-b_{i-1}}$, and from the definition of the two-stage procedure $Pr\left(J_k=1 \left| J_{c,2i-1}=1 \right.\right)=\frac{\pi_k}{1-b_{i-1}-a_i}$. This leads to
            \begin{eqnarray} \label{pprop2:eq9}
            Pr\left(J_k=1 \left| \sum_{j=1}^{2i-2} J_{c,j}=i-1 \right.\right) & = & \frac{\pi_k}{1-b_{i-1}}.
            \end{eqnarray}
        From the definition of the two-stage procedure, the denominator in (\ref{pprop2:eq7}) is
            \begin{eqnarray} \label{pprop2:eq10}
              Pr\left(\sum_{l=k_{i-1}+1}^{k-1} J_l=0 \left| \sum_{j=1}^{2i-2} J_{c,j}=i-1 \right.\right) & = & 1-Pr\left(\sum_{l=k_{i-1}+1}^{k-1} J_l=1 \left| \sum_{j=1}^{2i-2} J_{c,j}=i-1 \right.\right) \nonumber \\
              & = & 1-\frac{\sum_{l=k_{i-1}+1}^{k-1} \pi_l}{1-b_{i-1}}.
            \end{eqnarray}
        From (\ref{pprop2:eq7}), (\ref{pprop2:eq9}) and (\ref{pprop2:eq10}), we obtain
            \begin{eqnarray} \label{pprop2:eq11}
            Pr\left(J_k=1\left|\sum_{j=1}^{2i-2} J_{c,j}=i-1 , \sum_{l=k_{i-1}+1}^{k-1} J_l=0 \right.\right) & = & \frac{\pi_k}{1-b_{i-1}-\sum_{l <k \in u_{2i-1}} \pi_l} \nonumber \\
                                                                                                             & = & \frac{V_k^F-V_{k-1}^F}{1-V_{k-1}^F}.
            \end{eqnarray}
        From (\ref{pprop2:eq5}), (\ref{pprop2:eq6}) and (\ref{pprop2:eq11}), we obtain
            \begin{eqnarray} \label{pprop2:eq12}
            Pr\left(J_k=1\left|\sum_{j=1}^{2i-2} J_{c,j}+\sum_{l=k_{i-1}+1}^{k-1} J_l=i-1 \right.\right) & = & \frac{V_k^F-V_{k-1}^F}{1-V_{k-1}^F}, \nonumber \\
            Pr\left(J_k=1\left|\sum_{j=1}^{2i-2} J_{c,j}+\sum_{l=k_{i-1}+1}^{k-1} J_l=i \right.\right) & = & 0.
            \end{eqnarray}
        From equation (\ref{pprop2:eq1}), we have
            \begin{eqnarray} \label{pprop2:eq13}
              \sum_{j=1}^{2i-2} J_{c,j}+\sum_{l=k_{i-1}+1}^{k-1} J_l & = & \sum_{l=1}^{k_{i-1}} J_l+\sum_{l=k_{i-1}+1}^{k-1} J_l =\sum_{l=1}^{k-1} J_l,
            \end{eqnarray}
        and from (\ref{pprop2:eq12}), this implies that
            \begin{eqnarray} \label{pprop2:eq14}
            Pr\left(J_k=1\left|\sum_{l=1}^{k-1} J_l \right.\right) & = & \left\{\begin{array}{ll}
                                                                                  \frac{V_k^F-V_{k-1}^F}{1-V_{k-1}^F} & \textrm{if } \sum_{l=1}^{k-1} J_l=i-1, \\
                                                                                  0 & \textrm{if } \sum_{l=1}^{k-1} J_l=i,
                                                                                \end{array}
             \right. \nonumber \\
             & = & Pr\left(I_k=1\left|\sum_{l=1}^{k-1} I_l \right.\right),
            \end{eqnarray}
        where the last line in (\ref{pprop2:eq14}) follows from Step 2.b in Algorithm \ref{chr:samp}. This completes the proof.
        \end{proof}

    \subsection{Ordered sample} \label{appC2}

    \noindent We use the same notation as in Section \ref{appC1}, and consider a sample $S_c$ selected in $U_c$ by means of Chromy sampling with parameter $\phi$. In this case, we let $V_{c,i}=\sum_{j=1}^{i} \phi_j$ denote the cumulated inclusion probabilities up to unit $u_i$, with $V_{c,0}=0$. The integer part of $V_{c,i}$ is denoted as $V_{c,i}^I$, and the difference between $V_{c,i}$ and its integer part $V_{c,i}^I$ is the fractional part, denoted as $V_{c,i}^F$. \\

    Let us denote by $X_1<\ldots<X_n$ the selected units, ranked with respect to the natural order in the population $U$. The transition probabilities between the ranked selected units are given in Proposition \ref{prop3}.

    \begin{prop} \label{prop3}
    Let $S_c$ denote a sample selected by Chromy sampling with parameter $\phi$ in the population $U_c$. The transition probabilities between the ordered sampled units $X_1<\ldots<X_n$ are:
            \begin{eqnarray*}
              Pr(X_{i+1}=u_j|X_{i}=u_{2i-2}) & = & \left\{\begin{array}{ll}
                                                                               \frac{b_i}{1-a_i} & \textrm{if } j=2i, \\
                                                                               \frac{(1-b_i-a_{i+1})(1-a_i-b_{i})}{(1-a_i)(1-b_{i})} & \textrm{if } j=2i+1, \\
                                                                               \frac{a_{i+1}(1-a_i-b_i)}{(1-a_i)(1-b_i)} & \textrm{if } j=2i+2,
                                                                               \end{array}
               \right. \\
              Pr(X_{i+1}=u_j|X_{i}=u_{2i-1}) & = & \left\{\begin{array}{ll}
                                                                               \frac{b_i}{1-a_i} & \textrm{if } j=2i, \\
                                                                               \frac{(1-b_i-a_{i+1})(1-a_i-b_{i})}{(1-a_i)(1-b_{i})} & \textrm{if } j=2i+1, \\
                                                                               \frac{a_{i+1}(1-a_i-b_i)}{(1-a_i)(1-b_i)} & \textrm{if } j=2i+2,
                                                                               \end{array}
               \right. \\
              Pr(X_{i+1}=u_j|X_{i}=u_{2i})                           & = & \left\{\begin{array}{ll}
                                                                               \frac{1-b_i-a_{i+1}}{1-b_{i}} & \textrm{if } j=2i+1, \\
                                                                               \frac{a_{i+1}}{1-b_i} & \textrm{if } j=2i+2.
                                                                               \end{array}
               \right.
            \end{eqnarray*}
    \end{prop}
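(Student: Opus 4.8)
The plan is to reduce the statement to the behaviour of the running count $c_k:=\sum_{l=1}^{k}J_{c,l}$ of clusters selected among $u_1,\ldots,u_k$, and then to read the transition probabilities off Algorithm \ref{chr:samp}. I would start by computing the cumulated probabilities in $U_c$: from $a_i=i-V_{k_i-1}$, $b_i=V_{k_i}-i$ and the definitions $\phi_{2i-1}=V_{k_i-1}-V_{k_{i-1}}$, $\phi_{2i}=\pi_{k_i}$, a short computation gives $V_{c,2i-1}=i-a_i$ and $V_{c,2i}=i+b_i$, hence $V_{c,2i-1}^I=i-1$, $V_{c,2i-1}^F=1-a_i$, $V_{c,2i}^I=i$ and $V_{c,2i}^F=b_i$. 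In particular the cross-border clusters of $U_c$ are exactly the singletons $u_{2i}$, $i=1,\ldots,n-1$, and they carry the same parameters $a_i,b_i$ as the original cross-border units; so Step 2.b of Algorithm \ref{chr:samp} is applied precisely at the even-indexed clusters and Step 2.a precisely at the odd-indexed ones. The conventions $b_0=a_n=0$ make the same formulas valid at the boundary clusters $u_1$ and $u_{2n-1}$.

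Next I would establish the structural fact that the $i$-th sampled cluster satisfies $X_i\in\{u_{2i-2},u_{2i-1},u_{2i}\}$, and identify $c_{2i-1}$ in each case. By Proposition \ref{prop1} applied in $U_c$, both $c_{2i-2}$ and $c_{2i-1}$ lie in $\{i-1,i\}$, while $c_k$ is non-decreasing with increments in $\{0,1\}$. If $c_{2i-2}=i$, then since $c_{2i-3}\le i-1$ the count first reaches $i$ exactly at $u_{2i-2}$, so $X_i=u_{2i-2}$ and $c_{2i-1}=i$. If $c_{2i-2}=i-1$, then either $u_{2i-1}$ is selected, giving $X_i=u_{2i-1}$ and $c_{2i-1}=i$, or it is not, giving $c_{2i-1}=i-1=V_{c,2i-1}^I$, in which case Step 2.b forces $u_{2i}$ to be selected, so $X_i=u_{2i}$ and $c_{2i}=i$. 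Thus conditioning on $X_i=u_j$ amounts to fixing $c_{2i-1}$: equal to $i$ when $j\in\{2i-2,2i-1\}$, and to $i-1$ when $j=2i$. Since the Chromy count process is Markov -- every transition in Algorithm \ref{chr:samp} depends on the past only through the current value of the count -- the conditional law of the next sampled cluster depends on $X_i$ only through this value of $c_{2i-1}$, which is exactly why the first two displayed lines of Proposition \ref{prop3} coincide.

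It then remains to run Algorithm \ref{chr:samp} forward from the known value of $c_{2i-1}$. When $c_{2i-1}=i$: Step 2.b at the cross-border cluster $u_{2i}$, with count $V_{c,2i-1}^I+1$, gives selection probability $V_{c,2i}^F/V_{c,2i-1}^F=b_i/(1-a_i)$; if $u_{2i}$ is not selected then $c_{2i}=i=V_{c,2i}^I$ and Step 2.a at $u_{2i+1}$ gives selection probability $(V_{c,2i+1}^F-V_{c,2i}^F)/(1-V_{c,2i}^F)=(1-b_i-a_{i+1})/(1-b_i)$; if that too fails then $c_{2i+1}=i=V_{c,2i+1}^I$ and Step 2.b forces $u_{2i+2}$. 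Multiplying the successive probabilities yields the first two lines of the Proposition. When $c_{2i-1}=i-1$ the cluster $u_{2i}$ is $X_i$ itself, $c_{2i}=i$, and the same two steps at $u_{2i+1}$ and $u_{2i+2}$ give the third line. A concluding check that each line sums to $1$ -- using $\{(1-b_i-a_{i+1})+a_{i+1}\}/(1-b_i)=1$ -- confirms that no case has been omitted.

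The step I expect to be the main obstacle is the structural one: proving carefully that $X_i\in\{u_{2i-2},u_{2i-1},u_{2i}\}$ and that conditioning on $X_i$ pins down $c_{2i-1}$, since this is what collapses an apparently path-dependent transition into a clean two-case computation. Once that is in place, running the algorithm forward is routine arithmetic.
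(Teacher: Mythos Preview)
Your proposal is correct and follows essentially the same approach as the paper: both arguments identify the value of the running count $c_{2i-1}$ (or $c_{2i}$) that is determined by each possible location of $X_i$, invoke the Markov property of the Chromy count process to reduce conditioning on $X_i$ to conditioning on that count, and then read off the transition probabilities from Algorithm~\ref{chr:samp} using the explicit values $V_{c,2i-1}^F=1-a_i$ and $V_{c,2i}^F=b_i$. Your presentation is slightly more systematic---establishing the structural fact $X_i\in\{u_{2i-2},u_{2i-1},u_{2i}\}$ up front and computing the cumulated probabilities $V_{c,k}$ explicitly at the outset---but the substance is identical to the paper's case-by-case treatment.
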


    \begin{proof}
    \noindent We first consider the case when $X_i=u_{2i-2}$, which is equivalent to $\sum_{j=1}^{2i-2} J_{c,j}=i$. Therefore,
        \begin{eqnarray} \label{pprop3:eq1}
          Pr(X_{i+1}=u_j|X_{i}=u_{2i-2}) & = & Pr\left(X_{i+1}=u_j \left|\sum_{j=1}^{2i-2} J_{c,j}=i \right. \right) \nonumber \\
                                         & = & Pr\left(X_{i+1}=u_j \left|\sum_{j=1}^{2i-1} J_{c,j}=i \right. \right),
        \end{eqnarray}
    where the second line in (\ref{pprop3:eq1}) follows from the fact that, from Step 2.a of Algorithm \ref{chr:samp}, $\sum_{j=1}^{2i-2} J_{c,j}=i$ implies that $J_{c,2i-1}=0$. If $j=2i$:
        \begin{eqnarray} \label{pprop3:eq2}
          Pr(X_{i+1}=u_{2i}|X_{i}=u_{2i-2}) & = & Pr\left(X_{i+1}=u_{2i} \left|\sum_{j=1}^{2i-1} J_{c,j}=i \right. \right) \nonumber \\
                                            & = & Pr\left(J_{c,2i}=1 \left|\sum_{j=1}^{2i-1} J_{c,j}=i \right. \right) \nonumber \\
                                            & = & \frac{V_{c,2i}^F}{V_{c,2i-1}^F} = \frac{b_i}{1-a_i}.
        \end{eqnarray}
    If $j=2i+1$:
        \begin{eqnarray} \label{pprop3:eq3}
          Pr(X_{i+1}=u_{2i+1}|X_{i}=u_{2i-2}) & = & Pr\left(X_{i+1}=u_{2i+1} \left|\sum_{j=1}^{2i-1} J_{c,j}=i \right. \right) \nonumber \\
                                            & = & Pr\left(J_{c,2i+1}=1,J_{c,2i}=0 \left|\sum_{j=1}^{2i-1} J_{c,j}=i \right. \right) \nonumber \\
                                            & = & Pr\left(J_{c,2i+1}=1 \left|\sum_{j=1}^{2i} J_{c,j}=i \right. \right)
                                                  Pr\left(J_{c,2i}=0 \left|\sum_{j=1}^{2i-1} J_{c,j}=i \right. \right)\nonumber \\
                                            & = & \left(\frac{V_{c,2i+1}^F-V_{c,2i}^F}{1-V_{c,2i}^F}\right) \left(1- \frac{b_i}{1-a_i} \right) \nonumber \\
                                            & = & \frac{(1-b_i-a_{i+1})(1-a_i-b_{i})}{(1-a_i)(1-b_{i})},
        \end{eqnarray}
     where the last but one line in (\ref{pprop3:eq3}) follows from Step 2.a of Algorithm \ref{chr:samp} and from equation (\ref{pprop3:eq2}). If $j=2i+2$, we obtain similarly:
        \begin{eqnarray} \label{pprop3:eq4}
          Pr(X_{i+1}=u_{2i+2}|X_{i}=u_{2i-2}) & = & Pr\left(X_{i+1}=u_{2i+2} \left|\sum_{j=1}^{2i-1} J_{c,j}=i \right. \right) \nonumber \\
                                            & = & Pr\left(J_{c,2i+2}=1,J_{c,2i+1}=0,J_{c,2i}=0 \left|\sum_{j=1}^{2i-1} J_{c,j}=i \right. \right) \nonumber \\
                                            & = & Pr\left(J_{c,2i+2}=1 \left|\sum_{j=1}^{2i+1} J_{c,j}=i \right. \right)
                                                  Pr\left(J_{c,2i+1}=0 \left|\sum_{j=1}^{2i} J_{c,j}=i \right. \right)\nonumber \\
                                            & \times & Pr\left(J_{c,2i}=0 \left|\sum_{j=1}^{2i-1} J_{c,j}=i \right. \right)\nonumber \\
                                            & = & 1 \times \left(1- \frac{1-a_{i+1}-b_i}{1-b_i} \right) \left(1- \frac{b_i}{1-a_i} \right) \nonumber \\
                                            & = & \frac{a_{i+1}(1-a_i-b_i)}{(1-a_i)(1-b_i)}.
        \end{eqnarray}
     This gives the first equation in Proposition \ref{prop3}. \\

     \noindent Now, we consider the case when $X_i=u_{2i-1}$. We have
        \begin{eqnarray} \label{pprop3:eq5}
          X_i=u_{2i-1} & \Rightarrow & \sum_{j=1}^{2i-2} J_{c,j}=i-1 \textrm{ and } J_{c,2i-1}=1 \nonumber \\
                       & \Rightarrow & \sum_{j=1}^{2i-1} J_{c,j}=i.
        \end{eqnarray}
     Since at each step of Chromy sampling, the conditional probabilities only depend on the number of units already selected, this leads to
        \begin{eqnarray} \label{pprop3:eq6}
          Pr(X_{i+1}=u_j|X_{i}=u_{2i-1}) & = & Pr\left(X_{i+1}=u_j \left|\sum_{j=1}^{2i-1} J_{c,j}=i \right. \right),
        \end{eqnarray}
     which is identical to equation (\ref{pprop3:eq1}). Therefore, the second equation in Proposition \ref{prop3} follows. \\

     \noindent Finally, we consider the case when $X_i=u_{2i}$, which is equivalent to $\sum_{j=1}^{2i} J_{c,j}=i$. If $j=2i+1$:
        \begin{eqnarray} \label{pprop3:eq7}
          Pr(X_{i+1}=u_{2i+1}|X_{i}=u_{2i}) & = & Pr\left(X_{i+1}=u_{2i+1} \left|\sum_{j=1}^{2i} J_{c,j}=i \right. \right) \nonumber \\
                                            & = & Pr\left(J_{c,2i+1}=1 \left|\sum_{j=1}^{2i} J_{c,j}=i \right. \right) \nonumber \\
                                            & = & \frac{V_{c,2i+1}^F-V_{c,2i}^F}{1-V_{c,2i}^F} = \frac{1-a_{i+1}-b_{i}}{1-b_{i}}.
        \end{eqnarray}
     If $j=2i+2$:
        \begin{eqnarray} \label{pprop3:eq8}
          Pr(X_{i+1}=u_{2i+2}|X_{i}=u_{2i}) & = & Pr\left(X_{i+1}=u_{2i+2} \left|\sum_{j=1}^{2i} J_{c,j}=i \right. \right) \nonumber \\
                                            & = & Pr\left(J_{c,2i+2}=1,J_{c,2i+2}=0 \left|\sum_{j=1}^{2i} J_{c,j}=i \right. \right) \nonumber \\
                                            & = & Pr\left(J_{c,2i+2}=1 \left|\sum_{j=1}^{2i+1} J_{c,j}=i \right. \right)
                                                  Pr\left(J_{c,2i+1}=0 \left|\sum_{j=1}^{2i} J_{c,j}=i \right. \right) \nonumber \\
                                            & = & 1 \times \left(1-\frac{1-a_{i+1}-b_{i}}{1-b_{i}}\right) = \frac{a_{i+1}}{1-b_i}.
        \end{eqnarray}
     This completes the proof.
    \end{proof}

    \subsection{Proof of Theorem \ref{theo1}} \label{appC3}

    \noindent From Lemma 3.1 in \cite{cha:12} and our Proposition \ref{prop2}, Chromy sampling and ordered pivotal sampling have the same two-stage characterization. It is therefore sufficient to prove that they lead to the same sampling design when sampling with parameter $\phi$ in the clustered population $U_c$. However, from equations (4.2)-(4.4) in \cite{cha:12} and our Proposition \ref{prop3}, both Chromy sampling and ordered pivotal sampling have the same transition probabilities between ordered sampled units, which means that the induced sampling designs are identical.


\begin{thebibliography}{}

\end{thebibliography}


\begin{thebibliography}{}

\bibitem[Benedetti et~al., 2017]{ben:pie:pos:17}
Benedetti, R., Piersimoni, F., and Postiglione, P. (2017).
\newblock Spatially balanced sampling: a review and a reappraisal.
\newblock {\em International Statistical Review}, 85(3):439--454.

\bibitem[Bertail and Cl{\'e}men{\c{c}}on, 2019]{ber:cle:19}
Bertail, P. and Cl{\'e}men{\c{c}}on, S. (2019).
\newblock Bernstein-type exponential inequalities in survey sampling:
  Conditional poisson sampling schemes.
\newblock {\em Bernoulli}, 25(4B):3527--3554.

\bibitem[Chauvet, 2012]{cha:12}
Chauvet, G. (2012).
\newblock On a characterization of ordered pivotal sampling.
\newblock {\em Bernoulli}, 18(4):1320--1340.

\bibitem[Chauvet, 2017]{cha:17}
Chauvet, G. (2017).
\newblock A comparison of pivotal sampling and unequal probability sampling
  with replacement.
\newblock {\em Stat. Probabil. Lett.}, 121:1--5.

\bibitem[Chauvet et~al., 2017]{cha:haz:les:17}
Chauvet, G., Haziza, D., and Lesage, {\'E}. (2017).
\newblock Examining some aspects of balanced sampling in surveys.
\newblock {\em Statistica Sinica}, 27:313--334.

\bibitem[Chauvet and Le~Gleut, 2019]{cha:leg:18}
Chauvet, G. and Le~Gleut, R. (2019).
\newblock {Asymptotic properties of pivotal sampling with application to
  spatial sampling}.
\newblock preprint.

\bibitem[Chromy, 1979]{chr:79}
Chromy, J.~R. (1979).
\newblock Sequential sample selection methods.
\newblock In {\em Proceedings of the Survey Research Methods Section of the
  American Statistical Association}, pages 401--406.

\bibitem[Deville, 1998]{dev:98}
Deville, J.-C. (1998).
\newblock {\em Une nouvelle (encore une!) m{\'e}thode de tirage {\`a}
  probabilit{\'e}s in{\'e}gales}.
\newblock INSEE.

\bibitem[Deville and Till{\'e}, 1998]{dev:til:98}
Deville, J.-C. and Till{\'e}, Y. (1998).
\newblock Unequal probability sampling without replacement through a splitting
  method.
\newblock {\em Biometrika}, 85(1):89--101.

\bibitem[Deville and Till{\'e}, 2004]{dev:til:04}
Deville, J.-C. and Till{\'e}, Y. (2004).
\newblock Efficient balanced sampling: the cube method.
\newblock {\em Biometrika}, 91(4):893--912.

\bibitem[Fuller, 1970]{ful:70}
Fuller, W.~A. (1970).
\newblock Sampling with random stratum boundaries.
\newblock {\em Journal of the Royal Statistical Society: Series B},
  32(2):209--226.

\bibitem[Gerber et~al., 2019]{ger:cho:whi:19}
Gerber, M., Chopin, N., and Whiteley, N. (2019).
\newblock Negative association, ordering and convergence of resampling methods.
\newblock {\em Ann. Statist.}, 47(4):2236--2260.

\bibitem[Grafstr{\"o}m et~al., 2012]{gra:lun:sch:12}
Grafstr{\"o}m, A., Lundstr{\"o}m, N.~L., and Schelin, L. (2012).
\newblock Spatially balanced sampling through the pivotal method.
\newblock {\em Biometrics}, 68(2):514--520.

\bibitem[Joag-Dev et~al., 1983]{joa:pro:83}
Joag-Dev, K., Proschan, F., et~al. (1983).
\newblock Negative association of random variables with applications.
\newblock {\em The Annals of Statistics}, 11(1):286--295.

\bibitem[Mills et~al., 2018]{mil:hen:gol:kur:kon:que:rib:18}
Mills, S.~D., Henriksen, L., Golden, S.~D., Kurtzman, R., Kong, A.~Y., Queen,
  T.~L., and Ribisl, K.~M. (2018).
\newblock Disparities in retail marketing for menthol cigarettes in the united
  states, 2015.
\newblock {\em Health \& place}, 53:62--70.

\bibitem[Radwin et~al., 2018]{rad:con:nun:lac:wu:lew:win:sie:18}
Radwin, D., Conzelmann, J.~G., Nunnery, A., Lacy, T.~A., Wu, J., Lew, S., Wine,
  J., and Siegel, P. (2018).
\newblock 2015-16 national postsecondary student aid study (npsas: 16): Student
  financial aid estimates for 2015-16. first look. nces 2018-466.
\newblock {\em National Center for Education Statistics}.

\bibitem[Rust et~al., 2019]{rus:mye:dan:que:las:rib:19}
Rust, S.~M., Myers, A.~E., D'Angelo, H., Queen, T.~L., Laska, M.~N., and
  Ribisl, K.~M. (2019).
\newblock Tobacco marketing at snap- and wic-authorized retail food stores in
  the united states.
\newblock {\em Health Education \& Behavior}, 46(4):541--549.

\bibitem[S{\"a}rndal et~al., 1992]{sar:swe:wre:92}
S{\"a}rndal, C.-E., Swensson, B., and Wretman, J. (1992).
\newblock {\em Model assisted survey sampling}.
\newblock Springer Series in Statistics.

\bibitem[Schmitt et~al., 2018]{sch:cur:bou:wil:gla:van:wil:rog:18}
Schmitt, C.~L., Curry, L., Boudewyns, V., Williams, P.~A., Glasgow, L.,
  Van~Hersh, D., Willett, J., and Rogers, T. (2018).
\newblock Peer reviewed: Relationships between theoretically derived short-term
  outcomes and support for policy among the public and decision-makers.
\newblock {\em Preventing chronic disease}, 15.

\end{thebibliography}
\end{document}